\newtheorem{thm}{Theorem}
\newtheorem{lemma}[thm]{Lemma}
\newtheorem{prop}[thm]{Proposition}
\newenvironment{proof} { \emph{Proof.} } { {\rule{2mm}{2mm}}\\}
\newcommand{\bea}{\begin{eqnarray*}}
\newcommand{\eea}{\end{eqnarray*}}
\newcommand{\themap}{{\Xi}}
\newcommand{\e}{\epsilon}
\newcommand{\R}{\mathbb{R}}
\newcommand{\B}{\mathcal{B}}
\newcommand{\G}{{\mathcal{G}}}
\newcommand{\N}{{\mathcal{N}}}
\newcommand{\g}{{\gamma}}
\newcommand{\x}{{\mathbf{x}}}
\newcommand{\y}{{\mathbf{y}}}
\newcommand{\z}{{\mathbf{z}}}
\newcommand{\bm}{\begin{pmatrix}}
\newcommand{\fm}{\end{pmatrix}}
\begin{document}
\title{Bilipschitz maps of boundaries of certain negatively curved homogeneous spaces}
\author{Tullia Dymarz and Irine Peng}
\maketitle
\abstract{
In this paper we study certain groups of bilipschitz maps of the boundary minus a point of a negatively curved space of
the form $\mathbb{R} \ltimes_{M} \mathbb{R}^{n}$, where $M$ is a matrix whose eigenvalues
all lie outside of the unit circle.  The case where $M$ is diagonal was previously studied
by Dymarz in \cite{Dy}.  As an application, combined with work of Eskin-Fisher-Whyte and Peng, we provide the last steps in the proof of quasi-isometric rigidity for a class
of lattices in solvable Lie groups.}

\section{Introduction}

In \cite{H} Heintze characterized the class of connected, negatively curved homogeneous
spaces as those solvable Lie groups of the form $\mathbb{R} \ltimes N$, where N is a
nilpotent Lie group, and where the eigenvalues of the action of $\mathbb{R}$ on $N$ all lie strictly outside the unit circle.  If $X$ is such a space then the visual
boundary $\partial X$ is defined to be the set of equivalence classes of geodesic rays.
This visual boundary can be identified with the one-point compactification of $N$, and it
can be be equipped with a metric, the visual metric, by fixing a
reference point in $X$ and examining the Gromov product between the reference point and
any pairs of points on the boundary. There is another family of metrics $\{ d_{a,
\mathcal{H}} \}$, called Euclid-Cygan metrics, that can be defined on $\partial X \setminus \{
a\}$, for any point $a \in \partial X$ and any horosphere $\mathcal{H}$ centered at $a$. In this
metric, isometries of $X$ fixing $a$ act by homotheties.  There is an explicit
relationship between the restriction of the visual metric to $\partial X \setminus \{ a \}$ and a
Cygan metric given by Paulin \cite{HP}. In particular
the two metrics are quasiconformal. \\

In this paper we study bilipschitz maps with respect to a Euclid-Cygan metric $D_{M}$ on the
boundary of a negatively curved space $G_{M}=\mathbb{R} \ltimes_{M} \mathbb{R}^{n}$,
where $\R$ acts on $\R^n$ by a one parameter subgroup $M^t \subseteq GL(n,\R)$ such that
$M$ is a matrix whose eigenvalues all have norm greater than one. We call such maps $Bilip_{D_M}$ maps.
We show that with respect to a certain (partially) ordered basis $\mathcal{B}$ (see Section \ref{basissection})  all $Bilip_{D_M}$ maps have the following upper triangular form. 

\begin{prop} Let $F$ be a $Bilip_{D_M}$ map of $\R^n$. Then for $u \in \R^n$
$$F(u) =(f_1(\x_1, \ldots, \x_r),\ldots, f_r(\x_r))$$
where $(\x_1, \ldots, \x_r)$ are the coordinates of $u$ with respect to $\mathcal{B}$. Furthermore, each $f_i$ is bilipschitz with respect to $\x_i$ and continuous in the remaining coordinates.  
\end{prop}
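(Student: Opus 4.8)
The plan is to reinterpret the claimed triangular form as the statement that $F$ preserves a canonical flag of foliations, and then to detect that flag using the homothety action of $M^{t}$ together with the bilipschitz hypothesis. Recall that $D_{M}$ is the translation-invariant quasi-metric $D_{M}(u,v)=N(u-v)$ for the gauge $N$ determined by the flow, and that $M^{t}$ acts by homotheties, $D_{M}(M^{t}u,M^{t}v)=e^{t}D_{M}(u,v)$. Ordering the basis $\B$ so that the blocks $\R^{n}=V_{1}\oplus\cdots\oplus V_{r}$ collect the (generalized) eigenspaces by increasing modulus of eigenvalue, with $\alpha_{k}=\log|\lambda|$ on $V_{k}$ and $\alpha_{1}<\cdots<\alpha_{r}$, set $U_{k}=V_{1}\oplus\cdots\oplus V_{k}$ and let $\F_{k}$ be the foliation of $\R^{n}$ by cosets of $U_{k}$. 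The coordinate identity $F(u)=(f_{1}(\x_{1},\dots,\x_{r}),\dots,f_{r}(\x_{r}))$ is exactly equivalent to the assertion that $F$ carries each leaf of each $\F_{k}$ into a leaf of $\F_{k}$: if $u$ and $u'$ agree in the blocks $\x_{i},\dots,\x_{r}$, i.e. $u-u'\in U_{i-1}$, then $F(u)-F(u')\in U_{i-1}$. So the entire proposition reduces to leaf-preservation for the flag $U_{1}\subset\cdots\subset U_{r-1}$ together with the regularity statements.

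The mechanism for leaf-preservation is the anisotropy of $D_{M}$-balls. A $D_{M}$-ball of radius $\rho$ is comparable to the product over $k$ of the Euclidean balls of radius $\rho^{\alpha_{k}}$ in the blocks $V_{k}$; since $\alpha_{1}<\cdots<\alpha_{r}$ and $\rho<1$, as $\rho\to 0$ these boxes become increasingly eccentric, wide in the slow blocks and extremely thin in the fast blocks. I would exploit this by conjugating $F$ along the flow: after normalizing by a translation so that $F_{t}(0)=0$, the maps $F_{t}=M^{-t}\circ F\circ M^{t}$ are $D_{M}$-bilipschitz with the same constant for every $t$, so by Arzela--Ascoli a subsequence converges as $t\to-\infty$ to a $D_{M}$-bilipschitz limit $F_{\infty}$ which, by construction, intertwines the degenerating anisotropy. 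The thinness of the boxes in the fastest direction forces $F_{\infty}$ --- and then, transferring back through the homothety and using that $D_{M}$ is scale-invariant, $F$ itself --- to be rigid on the fastest block: two points in a common leaf of $\F_{r-1}$ cannot be separated into quantitatively different leaves without violating the bilipschitz bound at small scale. This base step says precisely that $F$ preserves the cosets of $U_{r-1}$, equivalently that $f_{r}=f_{r}(\x_{r})$. An induction on $r$ then finishes the job: because $D_{M}$ restricted to a coset of $U_{r-1}$ is again a Euclid-Cygan metric, for the matrix $M|_{U_{r-1}}$ on the invariant subspace $U_{r-1}$, the induced (translation-conjugate) maps between $U_{r-1}$-cosets are $Bilip_{D_{M|_{U_{r-1}}}}$ maps, to which the same statement applies, peeling off one block at a time.

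Once leaf-preservation is known, the regularity of the $f_{i}$ follows formally. Because $F$ preserves the cosets of both $U_{i-1}$ and $U_{i}$, it induces a $D_{M}$-bilipschitz map on the quotient $\R^{n}/U_{i-1}$ that further preserves the cosets of $U_{i}/U_{i-1}\cong V_{i}$; restricting to a fixed slice of the remaining coordinates $\x_{i+1},\dots,\x_{r}$ exhibits $\x_{i}\mapsto f_{i}$ as a bilipschitz self-map of $V_{i}$ equipped with the restriction of $D_{M}$, which is the asserted bilipschitz dependence on $\x_{i}$. Continuity of each $f_{i}$ in the remaining coordinates is inherited from the global continuity of the bilipschitz homeomorphism $F$, since the dependence of $f_{j}$ for $j<r$ on the higher blocks is exactly what the triangular form permits.

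The main obstacle is the passage beyond the diagonal case of \cite{Dy}. When $M$ is not diagonalizable --- or has complex eigenvalues, or several distinct eigenvalues of equal modulus --- the flow on a block $V_{k}$ is $e^{\alpha_{k}t}$ times a bounded rotational and unipotent factor, so $N|_{V_{k}}$ is only comparable to $|\,\cdot\,|^{1/\alpha_{k}}$ up to polylogarithmic corrections and the boxes above are genuinely sheared rather than rectangular. Keeping these corrections under uniform control across all scales --- so that the eccentricity argument still degenerates in the right directions, the limit $F_{\infty}$ is still block-triangular, and distinct leaves stay quantitatively separated --- is the delicate point. It is exactly this phenomenon that forces $\B$ to be only partially ordered and that requires the additional input developed in this paper beyond the diagonal argument.
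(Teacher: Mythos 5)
Your overall skeleton --- triangular form $\Leftrightarrow$ preservation of a flag of foliations, plus an induction that peels off one block at a time --- agrees with the paper's strategy (Proposition \ref{myfoliationlemma}). But the step that carries all the weight, preservation of the cosets of $U_{r-1}$, is asserted rather than proved, and neither of your two mechanisms proves it. Single-scale eccentricity of balls gives only a bound, never vanishing: if $p-q\in U_{r-1}$ and $D_M(p,q)=\rho$, the bilipschitz inequality allows the images' fastest-block coordinates to differ by as much as roughly $(K\rho)^{\alpha_r}$, which is positive. In the model case $D_M\bigl((x,y),(x',y')\bigr)=\max\bigl(|x-x'|,|y-y'|^{1/2}\bigr)$, ruling out a shear $(x,y)\mapsto (x,\,y+h(x))$ requires showing that $|h(x)-h(x')|\leq K^2|x-x'|^2$ forces $h$ to be constant --- a statement about all scales simultaneously, invisible at any one scale. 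Your blow-up does not supply this: the Arzela--Ascoli limit of $M^{-t}\circ F\circ M^{t}$ is merely another $K$-bilipschitz map; without a Rademacher/Pansu-type differentiability theorem (which you neither state nor prove, and which is not off-the-shelf for non-diagonalizable $M$) the limit has no linearity and no equivariance under the flow, so it has no block-triangular structure to transfer back to $F$. The missing idea is a multi-scale chaining argument: join $p$ to $q$ by chains of mesh $1/k$, apply the bilipschitz bound link by link, sum, and let $k\to\infty$. This is exactly what the paper formalizes with the length-metric invariants $\triangle_{\eta_{\alpha,\ell}}$ built from the gauges $\eta_{\alpha,j}(\omega)=j!\,\omega^{\alpha}/|\ln\omega|^{j}$: Lemma \ref{bilipTriangle} shows a $K$-$Bilip_{D_M}$ map distorts them by at most $K^{\alpha}$ (the $\ln K$ error washes out as the mesh shrinks, which is why the gauge is chosen this way), and Lemma \ref{trianglelemma} shows that their $0$/finite/$\infty$ trichotomy detects exactly which coordinate is the largest one in which $p$ and $q$ differ. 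Foliation preservation is then immediate, and your sketch contains no substitute for this.

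There are two further gaps. First, your flag is too coarse: you group coordinates by eigenvalue modulus only, whereas the basis $\B$ of Section \ref{basissection} refines each generalized eigenspace by nilpotency degree, the preserved subspaces $U_{\alpha,\ell}$ containing the kernels of powers of $(A-\alpha I)$. For a single Jordan block your version of the statement is vacuous ($r=1$), while the proposition still asserts, nontrivially, that the coordinate at the top of the Jordan chain maps independently of the lower ones. You correctly identify the polylogarithmic shear as ``the delicate point'' but then defer it; it is precisely the content of this paper beyond \cite{Dy}, and it is handled by the $|\ln\omega|^{j}$ factor in the gauge. Second, the regularity of $f_i$ in $\x_i$ does not ``follow formally'' from restriction: because the Jordan shear mixes the coordinates within a block, the $D_M$-distance between $F(p)$ and $F(q)$ controls only combinations of the form $\bigl|\sum_{i\geq \ell}(-1)^{i}\frac{t^{i-\ell}}{(i-\ell)!}\Delta\y_{\alpha,i}\bigr|$, not the individual $|\Delta\y_{\alpha,\ell}|$, and the restricted $D_M$ differs from the Euclidean metric on a block by logarithmic factors. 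The paper extracts the genuine Euclidean bilipschitz bound from the exact equality $\triangle_{\eta_{\alpha,\ell}}(p,q)=|\x_{\alpha,\ell}|$ in the middle case of Lemma \ref{trianglelemma} combined with Lemma \ref{bilipTriangle}, and disentangles the shear by the inductive estimate of Lemma \ref{XitheMap}; some argument of this kind is unavoidable.
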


Next we consider
groups of uniform $QSim_{D_M}$ maps, maps which are compositions of homotheties
($Sim_{D_M}$ maps) and $Bilip_{D_M}$ maps all with a
uniform bilipschitz constant. Under certain conditions we are able to conjugate such a
group into the group of almost homotheties ($ASim_{D_M}$ maps). See Section
\ref{two} for a precise definition. We prove the following theorem:

\begin{thm}\label{tukia} Let $\G$ be a cocompactly acting uniform group of $QSim_{D_M}$ maps of
$(\R^n, D_M)$ where $M$ is a matrix with all eigenvalues of norm greater than
one. Then there exists a $QSim_{D_M}$ map $F$ such that $$F \G F^{-1} \subset
ASim_{D_M}(\R^n).$$ \end{thm}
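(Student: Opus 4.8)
The plan is to adapt Tukia's method for conjugating uniformly quasiconformal groups to conformal ones, carried out here in the graded geometry of $(\R^n, D_M)$. The first step is to establish that every element of $\G$ is differentiable almost everywhere in the sense appropriate to $D_M$: using the upper-triangular normal form of the Proposition together with a Rademacher/Pansu-type differentiation theorem, I would show that at almost every $u$ a $Bilip_{D_M}$ map has a well-defined derivative lying in the group $\Lie$ of $M$-graded automorphisms of $\R^n$ (block-upper-triangular linear maps interacting correctly with the scaling $M^t$), and that a $QSim_{D_M}$ map has derivative of the form (graded automorphism) composed with an $M^t$-scaling. The uniform bilipschitz constant, combined with cocompactness, would force these derivatives to range over a fixed compact subset of $\Lie$ modulo scaling, independent of the point and the group element.

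The heart of the argument is to produce a $\G$-invariant measurable ``conformal structure.'' I would let $\mathcal{X}$ denote the homogeneous space of $D_M$-compatible structures, realized as $\Lie$ modulo the stabilizer of the standard structure (the graded analogue of $SL(n,\R)/SO(n)$), and record at each point the pullback structures $(Dg)^*(\mathrm{std})$ for $g \in \G$. Because the derivatives lie in a compact family, the orbit of structures at each point is a bounded subset of $\mathcal{X}$; using the nonpositive-curvature (or at least convexity and unique-circumcenter) properties of $\mathcal{X}$, I would take a Tukia-style barycenter to define, measurably in $u$, a single structure $\mu(u)$. Uniqueness of the barycenter makes $u \mapsto \mu(u)$ automatically $\G$-equivariant, and cocompactness guarantees that $\mu$ is uniformly bounded, hence bilipschitz-equivalent to the standard structure.

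Finally I would straighten $\mu$: invoke a measurable-mapping theorem adapted to $D_M$ (the analogue of Sullivan's higher-dimensional measurable Riemann mapping theorem) to produce a single $QSim_{D_M}$ map $F$ with $(DF)^*(\mathrm{std}) = \mu$ almost everywhere. Equivariance of $\mu$ then yields $(D(FgF^{-1}))^*(\mathrm{std}) = \mathrm{std}$ for every $g$, so each conjugate $FgF^{-1}$ has derivative an $M^t$-scaling composed with a structure-preserving part almost everywhere; an integration argument upgrades this infinitesimal similarity to the conclusion that $FgF^{-1} \in ASim_{D_M}(\R^n)$, giving $F\G F^{-1} \subset ASim_{D_M}(\R^n)$. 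The main obstacle is this straightening step: unlike the classical Euclidean case, $\R^n$ carries the nonisotropic metric $D_M$, so one must show that a bounded invariant measurable structure respecting the flag $\mathcal{B}$ can be integrated to a genuine $D_M$-bilipschitz map. Verifying that the barycenter construction keeps $\mu$ inside the correct flag-respecting homogeneous space, and that the resulting $F$ preserves the graded structure block by block (as forced by the coordinate form $F(u)=(f_1(\x_1,\ldots,\x_r),\ldots,f_r(\x_r))$ of the Proposition), is where the bulk of the technical work will lie.
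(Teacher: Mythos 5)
Your outline is Tukia's strategy, but its third step rests on a tool that does not exist, and this is a genuine gap rather than a technicality. In dimension two, straightening a bounded invariant measurable conformal structure is the Ahlfors--Bers/Sullivan measurable Riemann mapping theorem; in dimensions $n \ge 3$ there is no such theorem, even in the classical isotropic setting, and it is provably false that an invariant bounded measurable structure can always be integrated: Tukia himself constructed uniformly quasiconformal groups of $\R^n$, $n \ge 3$, that are not quasiconformally conjugate to M\"obius groups, although every uniformly quasiconformal group preserves a bounded measurable conformal structure by exactly the barycenter construction you describe. Consequently the dynamical hypothesis (cocompactness) cannot be quarantined in the "bounded orbit of structures" step --- it must enter the straightening itself. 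In Tukia's positive results \cite{T}, and in the diagonal-case theorem of \cite{Dy} on which this paper builds, the conjugating map $F$ is not obtained by solving a Beltrami-type equation: it is extracted as a limit of group elements composed with similarities, zooming in at a point of approximate continuity of the invariant structure that is also a conical point of the action; cocompactness is what supplies such points and makes the rescaled sequence precompact. Your proposal omits precisely this zoom-and-rescale construction. A secondary concern is your first step: for non-diagonalizable $M$ the natural blow-ups are along the anisotropic dilations $\delta_t = M^{\ln t}$, whose Jordan blocks contribute polynomial shear factors in $t$, and an almost-everywhere differentiation theorem of Rademacher--Pansu type is not available off the shelf in this setting.

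The paper's actual proof avoids both difficulties and is structured quite differently: it never differentiates. Lemmas \ref{bilipTriangle} and \ref{trianglelemma} use the length functionals $\triangle_{\eta_{\alpha,\ell}}$ to prove Proposition \ref{myfoliationlemma}, which puts every $Bilip_{D_M}$ map in upper-triangular form relative to the flag of foliations $U_{\alpha,\ell}$; Theorem \ref{tukia} is then proved by induction along this flag, following Section 3 of \cite{Dy}. The base case (the bottom block) is conjugated to similarities by the diagonal-case result of \cite{Dy}, which is where the Tukia-style rescaling argument actually lives, and the inductive step (Proposition \ref{simprop}, followed by the uniform multiplicative constant argument and Proposition \ref{rotconstprop} for the rotation part) upgrades the fiber maps to genuine similarities one block at a time; the only new ingredients in the non-diagonal case are the replacement of the diagonal dilation by $\delta_t = M^{\ln t}$ and the modified rotation argument. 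If you want to salvage your outline, replace the Sullivan-type straightening with Tukia's zooming argument and be prepared to run it block by block along the flag --- at which point you will have reconstructed essentially the paper's proof.
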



This theorem generalizes the work of Dymarz who considers the same problem in the setting where $M$ is
diagonalizable. Theorem \ref{tukia} fills in the missing ingredient in the proof of the following theorem:

\begin{thm}\cite{EFW}\label{efw} Let $\Gamma$ be a finitely generated group quasi-isometric to $\R\ltimes_M \R^n$ where $M$ is a
matrix with $\det{M}=1$ and whose eigenvalues all lie off of the unit circle. Then $\Gamma$ is virtually a lattice in a
solvable Lie group of the form $\R \ltimes_{M^\alpha} \R^n$ for some $\alpha \in \R_+$. \end{thm}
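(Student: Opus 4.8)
The plan is to translate the statement about the abstract group $\Gamma$ into a statement about a cocompact uniform group of boundary maps, and then feed that group into Theorem \ref{tukia}. First I would fix a quasi-isometry $\phi \colon \Gamma \to G_M$ together with a coarse inverse. Since $\det M = 1$ and the eigenvalues of $M$ all lie off the unit circle, I split $\R^n = \R^{n_+} \oplus \R^{n_-}$ into the expanding and contracting subspaces of $M$, so that $G_M$ has the coarse geometry of a higher-dimensional $\mathrm{SOL}$: it is foliated by two families of exponentially distorted horospherical leaves, with an ``upper'' boundary identified with $(\R^{n_+}, D_{M_+})$ (where $M_+$ expands) and a ``lower'' boundary identified with $(\R^{n_-}, D_{M_-})$ (where $M_-^{-1}$ expands), each carrying a Euclid-Cygan metric as in the Introduction. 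Transporting the left-translation action of $\Gamma$ on itself across $\phi$ gives an action of $\Gamma$ on $G_M$ by uniform quasi-isometries.

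The central geometric input, supplied by the work of Eskin--Fisher--Whyte and Peng, is that every self-quasi-isometry of $G_M$ coarsely preserves this product-of-foliations structure. The mechanism is coarse differentiation: at almost every scale and location a quasi-isometry is uniformly close to a map respecting the two flows, which forces it to carry horospherical leaves to horospherical leaves up to bounded Hausdorff distance. Consequently $\phi$ induces boundary maps that are bilipschitz with respect to $D_{M_+}$ and $D_{M_-}$, with a rescaling of the flow parameter (replacing $M$ by a power $M^\alpha$) absorbing the possibility that $\phi$ distorts the expanding and contracting directions at different exponential rates. In this way the $\Gamma$-action produces a group $\G$ of uniform $QSim_{D_M}$ maps of, say, $(\R^{n_+}, D_{M_+})$, and since $\phi$ has cobounded image the induced action of $\G$ is cocompact. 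Here the Proposition is what guarantees that these induced maps have the controlled upper-triangular form required for the class to which Theorem \ref{tukia} applies, even when $M$ is not diagonalizable.

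Next I would apply Theorem \ref{tukia}: there is a $QSim_{D_M}$ map $F$ with $F\G F^{-1} \subset ASim_{D_M}(\R^n)$. The group $ASim_{D_M}$ is, up to bounded error, the group of self-isometries of $G_M$ fixing the chosen boundary point together with the homotheties coming from the flow, and it is a finite-dimensional Lie group whose identity component is essentially $\R \ltimes_{M^\alpha} \R^n$ for the scaling exponent $\alpha$ obtained above. The final step is to upgrade ``conjugate into $ASim_{D_M}$'' to ``virtually a lattice.'' After conjugating by $F$ the group $\Gamma$ acts properly discontinuously and cocompactly, up to the bounded almost-similarity error, by isometries on the rescaled space $\R \ltimes_{M^\alpha} \R^n$; standard arguments (discreteness of a properly discontinuous cocompact subgroup of a Lie group, passing to a finite-index subgroup to kill the bounded error and any permutation of eigenspaces) then identify a finite-index subgroup of $\Gamma$ with a lattice in that solvable Lie group.

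The hard part is not the assembly above but the coarse-differentiation step that manufactures honest boundary bilipschitz maps out of a mere quasi-isometry: this is where essentially all of the genuine rigidity lies, and it is the EFW/Peng machinery rather than a routine calculation. Within the part that the present paper controls, the main obstacle is ensuring throughout that the non-diagonalizable Jordan blocks of $M$ do not destroy the structure, so that the induced maps genuinely fall into the class governed by the Proposition and Theorem \ref{tukia}; once that is secured, the remaining passage from $ASim_{D_M}$ to a lattice is formal.
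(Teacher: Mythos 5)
Your first two steps track the paper's argument: by Eskin--Fisher--Whyte and Peng the quasi-action of $\Gamma$ on $G_M$ is by height-respecting quasi-isometries, hence induces uniform $QSim_{D_{M_l}}$ and $QSim_{D_{M_u}}$ actions on the two boundaries, and Theorem \ref{tukia} conjugates these into $ASim$ maps, i.e.\ an action of $\Gamma$ on $G_M$ by almost isometries. The gap is in your last step, precisely where you declare the rest ``formal.'' First, $ASim_{D_M}(\R^n)$ is not a finite-dimensional Lie group with identity component $\R\ltimes_{M^\alpha}\R^n$: an almost similarity is only a similarity \emph{along each leaf} of the foliation, with leafwise translational parts (the $B_\y$ terms of Section \ref{tukiasection}) that depend on the transverse coordinates, so conjugating $\G$ into $ASim_{D_M}$ does not produce an isometric action on any model space. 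Second, the residual ``bounded almost-similarity error'' is not killed by passing to a finite-index subgroup; finite index does nothing to a uniformly bounded defect. Third, even granting an honest properly discontinuous cocompact isometric action, there is no general discreteness argument that places a group as a lattice in a prescribed solvable Lie group --- existence of lattices in solvable groups is delicate, and the relevant tool is Mostow's theorem for \emph{polycyclic} groups, which requires first proving polycyclicity.

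That polycyclicity proof is exactly what the paper does and what your proposal omits, and it is where the genuinely new non-diagonal work of this section lives. Following Section 4.2 of \cite{Dy}, the almost-isometric action is reduced to a proper quasi-action of a subgroup $\N\subseteq\Gamma$ by $Bilip_{D_M}$ maps of unipotent type,
$$(\x_1,\ldots,\x_r)\mapsto(\x_1+B_1(\x_2,\ldots,\x_r),\ldots,\x_r+B_r),$$
and one must show any group acting properly this way is finitely generated polycyclic. In the diagonal case the needed bounds on the $B_i$ are H\"older bounds; in the non-diagonal case they are supplied by Lemma \ref{previouslemma} (via Lemma \ref{XitheMap}) and sharpened using the group structure of $\N$ in Lemma \ref{Irineslemma}. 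Only then does Mostow \cite{Mos} give that $\Gamma$ is virtually a lattice in some solvable Lie group $\mathcal{L}$; the identification $\mathcal{L}\simeq\R\ltimes_{M'}\R^n$ uses the combination of results cited in Section 4.3 of \cite{Dy}, and the exponent $\alpha$ --- which you try to extract earlier from a rescaling of the flow --- actually comes only at the very end, from Theorem 5.11 of \cite{FM3}, which forces $M'$ to have the Jordan form of $M^{\alpha}$. Without this chain your argument produces neither the lattice embedding nor the power $\alpha$.
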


\subsection{Outline}  In Section \ref{two} we prove the main results
concerning  the structure of $Bilip_{D_M}$ maps. We show that such a map preserves a flag of foliations determined by the generalized eigenspaces of $M$. This is where the non-diagonal case differs most from the diagonal case. In Section \ref{tukiasection} we prove Theorem \ref{tukia} and in Section \ref{endgame} we fill in the gaps of the proof of Theorem \ref{efw}.

\section{The space $G_{M}$ and $\partial G_M$}\label{two}
Given a $n \times n$ matrix $A$ with positive real eigenvalues, the eigenvalues of the
matrix $M=e^{A}$ are all greater than $1$.  Conversely, any matrix with eigenvalues
greater than one can be written in the form $e^A$ where $A$ has positive
eigenvalues. 
Furthermore, up to a compact factor and squaring if necessary, any matrix with eigenvalues outside the unit circle can be identified with a matrix with eigenvalues greater than one.\footnote{maybe leave this for a later point when talking about quasi-isometries}
Under this assumption, the semidirect product $G_M=\mathbb{R} \ltimes_{M}
\mathbb{R}^{n}$, where the $\R$ action is given by $t \mapsto e^{tA}$, is a connected and
negatively curved homogeneous space.  \smallskip

We can equip $G_{M}$ with a path metric $d_R$ induced from a left invariant Riemannian metric but to work with this metric explicitly would be cumbersome.
Instead, we chose to work with a metric $d_L$ which is bilipschitz equivalent to $d_R$ and much easier to describe.
To define $d_L$ we first assign a metric to each height level set as in \cite{FM3}:
$$d_{t,M}(p,q):= \| e^{-tA} (p-q) \|$$
where $\| \cdot \|$ is the coordinate-wise max norm.  For two points $(t,p),(t',q)\in G_M$ let $t_0$ be the smallest height at which
$d_{t_0,M}(p,q)\leq 1$.
If $t_0 \geq t,t'$ then we set
 \[  d_L((t,p), (t',q)) := |t-t_0| + |t_0-t'| +1\]
 otherwise (assuming that $t>t'$) we set
 \[  d_L((t,p), (t',q)) := (t-t') + \| e^{- tA} (p-q) \|. \]





\noindent As in \cite{Dy} we identify $\partial G_M \setminus \{\infty\} \simeq \R^n$
and define the boundary metric $D_M$ by setting for $p,q \in \partial G_M \setminus \{\infty\} $
$$D_M(p,q)=e^{t_0}$$
where $t_0$ is the smallest height at which $\| e^{-t_0A} (p-q) \| \leq 1$.
In the following section we derive a coordinate based expression for $D_M$.\\

\noindent{\bf Remark.} To compare $D_M$ with the
Euclid-Cygan metric from \cite{HP}
let $\mathcal{H}$ be the horosphere centered at $\infty$ defined by $t=0$.  Let $p_t, q_t$ be vertical geodesics based at $p, q \in \partial G_M$ respectively, parametrized so that $p_0, q_0$ both lie on $\mathcal{H}$ and so that $t$ corresponds to height.
Then the Euclid-Cygan metric is given by
%
\bea
d_{\infty, \mathcal{H}}(p,q)  &=& \lim_{t \rightarrow -\infty} e^{-\frac{1}{2}(-2t - d_{L}(p_{t}, q_{t}))}\\
&=& \lim_{t \rightarrow -\infty} e^{\frac{1}{2}(2t +2(t_0-t) +1)}\\
&=& Ce^{t_0}.
\eea



%

\subsection{The $D_{M}$ metric in coordinates}\label{basissection}
In this subsection, given $M=e^A$ where $A$ is a $n \times n$ matrix with positive real eigenvalues as before,
we work out the expression of $D_{M}$ in coordinates with respect to
the basis in which the matrix $A$ appears in its Jordan canonical form.

Let
$V_{\alpha}$ be the generalized eigenspace corresponding to the eigenvalue $\alpha$, and
let $\{ v^{\alpha}_{i} \}_{i}$ be generators of Jordan chains in $V_{\alpha}$. Each
$v^{\alpha}_{i}$ is associated with a number $n_{i}$ such that $(A-\alpha I)^{n_{i}}
v^{\alpha}_{i}=0$, and $(A-\alpha I)^{n_{i}-1} v^{\alpha}_{i} \not= 0$. Then $V_{\alpha}$
has a basis of the form
\[ \B_{\alpha} = \bigcup_{i} \{(A-\alpha I )^{n_{i}-1} v^{\alpha}_{i}, \cdots, (A-\alpha I)^{1} v^{\alpha}_{i},
v^{\alpha}_{i}\}. \]
Let \[ \B_{\alpha, \ell}=\B_{\alpha} \cap \left( \mbox{ker}(A-\alpha I)^{\ell}
\backslash \mbox{ker} (A-\alpha I)^{\ell - 1} \right). \]
\noindent Then a basis of $\mathbb{R}^{n}$ can be given by
\[ \B= \bigcup_{\alpha} \bigcup_{\ell} \B_{\alpha, \ell}. \]
We now turn $\B$ into an ordered basis by fixing an order on each
$\B_{\alpha, \ell}$ and by declaring elements of $\B_{\alpha,\ell}$  to 
take precedence over elements in $\B_{\beta, \iota}$ if either $\alpha >
\beta$, or $\alpha=\beta$ and $\ell > \iota$.
This ordering on $\B$ produces a foliation $\{ U_{\alpha,\ell}  \}$ of $\R^n$ where
\[ U_{\alpha, \ell} = \mbox{span } \{  \bigcup_{(\beta,j) < (\alpha, \ell) } \B_{\beta,j}  \}. \]

\noindent{}We can coordinatize each element $u \in \mathbb{R}^{n}$ with respect to
$\B$ as $(\mathbf{x}_{\alpha,1}, \mathbf{x}_{\alpha,2},
\cdots \mathbf{x}_{\beta,1}, \mathbf{x}_{\beta, 2} \cdots )$ where $\alpha < \beta$, and
$\mathbf{x}_{\alpha,j}$ is a vector of coefficients with respect to elements of
$\B_{\alpha, j}$ written in the ascending order.\\


In these coordinates, write $p-q=(\ldots,\Delta \x_{\alpha,j},\ldots)$. Then we can express
$$D_M(p,q)=e^{t_0}$$ where $t_0$ is the smallest value of $t$ that satisfies the inequality
\begin{equation}\label{DMdefn} 1 \geq
\max_{\alpha, \ell,j} e^{-\alpha t} \left|\sum_{i=j}^{\ell}(-1)^{i}\frac{t^{i-j}}{(i-j)!}
(\Delta \x_{\alpha,i}) \right|.
\end{equation}
In particular, for some $j_0,\ell_0$ and $\alpha_0$ we have
$$D_M(p,q)= \left| \sum_{i=j_0}^{\ell_0}(-1)^{i}\frac{t_0^{i-j_0}}{(i-j_0)!}(\Delta \x_{\alpha_0,i})\right|^{1/\alpha_0}.$$


%
%
\subsection{Map definitions}
\noindent{}{\bf Bilipschitz maps.} We say a map $F: \mathbb{R}^{n} \rightarrow
\mathbb{R}^{n}$ is \emph{bilipschitz with respect to $D_M$} if there is a constant $K$
such that \[ \frac{1}{K} D_{M}(p,q) \leq D_{M}(F(p), F(q)) \leq K D_{M}(p,q)  \mbox{ for
all  } p, q \in \mathbb{R}^{n}. \] \noindent We call such a map a $Bilip_{D_M}$ map.\\

\noindent{}{\bf Similarity.} A map $F: \mathbb{R}^{n} \rightarrow \mathbb{R}^{n}$ is a $Sim_{D_M}$ map or a 
\emph{similarity with respect to $D_{M}$} if there is a constant $c$ such that \[
D_{M}(p,q)=e^{c} D_{M}(F(p),F(q)) , \mbox{ for all } p,q \in \mathbb{R}^{n}. \]\smallskip

\noindent{}{\bf Quasi-similarity.} A $QSim_{D_M}$ map is a $Bilip_{{D}_M}$ map composed with a
$Sim_{D_M}$ map.
 In particular a $QSim_{D_M}$ map is again $Bilip_{D_M}$ but this distinction is necessary because when we consider \emph{uniform} groups of $QSim_{D_M}$ maps we will mean uniform bilipschitz constants up to composing with a similarity. \\

\noindent{}{\bf Almost similarity.} \emph{Almost similarities} or $ASim_{D_M}$ maps form a restricted subset of all $QSim_{D_M}$ maps. In the next section we show that any $Bilip_{D_M}$ preserves a certain flag of foliations defined by the generalized eigenspaces of $M$. An $ASim_{D_M}$ map is one which, in addition to preserving these foliations, induces a similarity map along each leaf of this foliation such that all of the dilation and rotation constants are compatible with $M$. For an explicit definition see Section \ref{tukiasection}.

\subsection{Properties of  $Bilip_{D_{M}}$ maps.}\label{bilipDM}

In this section we examine the structure of $Bilip_{D_{M}}$ maps.  Our main results are
Proposition \ref{myfoliationlemma} and Lemma \ref{XitheMap} which show that a $Bilip_{D_M}$ map must preserve a certain flag of foliations defined by eigenvalues and Jordan block filtrations of $M$. Furthermore, these results give growth conditions that a $Bilip_{{D}_M}$ must necessarily satisfy along leaves of these foliations.\smallskip

Define a function $\eta_{\alpha,j}: \R \to \R$ by $$ \eta_{\alpha,j}(\omega)=\frac{j!\ \omega^{\alpha}}{|\ln{\omega}|^{j}}. $$
For two points $p,q \in \R^n$ let $$\triangle_{\eta_{\alpha,j}}(p,q) = \liminf_{ k \to
\infty} \sum \eta_{\alpha,j}(D_M(p_i,p_{i+1}))$$ where the $\liminf$ is taken over all finite
sequences $\{ p_i\}_{i=0}^m$ with  $p=p_0$, $q=p_m$ and $D_M(p_i,p_{i+1}) = 1/k$. Note that
$\triangle_{\eta_{\alpha,j}}$ can be thought of as the ``length'' metric associated to the
``metric'' $\eta_{\alpha,j}\circ D_M$.

\begin{lemma}\label{bilipTriangle}
If $F:\R^n \to \R^n$ is a $K$-$Bilip_{D_M}$ map then
\[ \frac{1}{K^{\alpha}} \triangle_{\alpha,j}(p,q) \leq \triangle_{\alpha,j}(F(p),F(q)) \leq K^{\alpha}\triangle_{\alpha,j}(p,q) \]
\end{lemma}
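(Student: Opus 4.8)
The plan is to show that the quantity $\triangle_{\eta_{\alpha,j}}$ transforms nicely under a $K$-$Bilip_{D_M}$ map by controlling what happens to each individual term $\eta_{\alpha,j}(D_M(p_i,p_{i+1}))$ in the defining sum. The key observation is that in the definition of $\triangle_{\eta_{\alpha,j}}(p,q)$, all consecutive distances are pinned to the common value $D_M(p_i,p_{i+1})=1/k$, and we take $k\to\infty$. Under $F$, the image sequence $\{F(p_i)\}$ no longer has consecutive distances exactly equal to $1/k$, but bilipschitzness forces $D_M(F(p_i),F(p_{i+1})) \in [\tfrac{1}{Kk},\tfrac{K}{k}]$. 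So the first step is to understand how $\eta_{\alpha,j}(\omega)=\tfrac{j!\,\omega^\alpha}{|\ln\omega|^j}$ scales when its argument is multiplied by a bounded factor $\lambda\in[1/K,K]$ and $\omega\to 0$.

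First I would compute the asymptotic ratio $\eta_{\alpha,j}(\lambda\omega)/\eta_{\alpha,j}(\omega)$ as $\omega\to 0$. Writing it out, this ratio equals $\lambda^\alpha\bigl(\tfrac{|\ln\omega|}{|\ln(\lambda\omega)|}\bigr)^j = \lambda^\alpha\bigl(\tfrac{|\ln\omega|}{|\ln\omega+\ln\lambda|}\bigr)^j$, and since $\ln\lambda$ is bounded while $|\ln\omega|\to\infty$, the parenthesized factor tends to $1$. Thus for any fixed $\lambda\in[1/K,K]$ we get $\eta_{\alpha,j}(\lambda\omega)\sim\lambda^\alpha\eta_{\alpha,j}(\omega)$, and crucially the $\lambda^\alpha$ stays in $[K^{-\alpha},K^\alpha]$ uniformly. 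The second step is to make this uniform over all $\lambda$ in the range $[1/K,K]$ simultaneously, which is where the $\liminf$ and the $k\to\infty$ limit do the work: I would show that for every $\e>0$ there is $k_0$ so that for all $k\ge k_0$ and all $\omega\in[\tfrac{1}{Kk},\tfrac{K}{k}]$ the logarithmic correction factor lies within $(1\pm\e)$ of $1$.

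Given this, the main estimate follows by a telescoping comparison of sums. For an approximating sequence $p=p_0,\ldots,p_m=q$ with each $D_M(p_i,p_{i+1})=1/k$, the image points $F(p_i)$ form a valid competitor sequence for $\triangle_{\eta_{\alpha,j}}(F(p),F(q))$ after refining each step so that consecutive image distances equal $1/k'$ for an appropriate $k'$; alternatively, one bounds the image sum directly using that $\eta_{\alpha,j}$ applied to each image distance is at most $(K^\alpha+\e)$ times $\eta_{\alpha,j}(1/k)$. Summing over $i$ and taking $\liminf$ over sequences together with $k\to\infty$ kills the $\e$ error and yields the upper bound $\triangle_{\alpha,j}(F(p),F(q))\le K^\alpha\triangle_{\alpha,j}(p,q)$. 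Applying the same argument to $F^{-1}$, which is a $K$-$Bilip_{D_M}$ map as well, gives the matching lower bound.

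The main obstacle I anticipate is the bookkeeping needed to handle the mismatch between the exact constraint $D_M(p_i,p_{i+1})=1/k$ in the definition and the fact that $F$ only controls image distances up to the factor $K$: I must verify that refining the image sequence (subdividing steps so image distances return to the prescribed mesh $1/k$) does not distort the sum, and that the refinement's extra vertices only decrease or controllably change the $\liminf$. This is a genuine subtlety because $\triangle_{\eta_{\alpha,j}}$ is defined as a $\liminf$ of sums with a rigid mesh constraint rather than a clean infimum over all partitions, so I would need to argue that the mesh constraint is compatible with composition by a bilipschitz map in the limit — essentially that the ``length'' $\triangle_{\eta_{\alpha,j}}$ is insensitive to replacing the exact mesh $1/k$ by a comparable mesh, which the logarithmic-correction computation above is designed to guarantee.
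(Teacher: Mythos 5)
Your proposal is correct and follows essentially the same route as the paper: the paper's proof also pushes an arbitrary mesh-$1/k$ chain forward under $F$, squeezes each term $\eta_{\alpha,j}(D_M(F(p_{i-1}),F(p_i)))$ between $\eta_{\alpha,j}\left(\frac{1}{K}D_M(p_{i-1},p_i)\right)$ and $\eta_{\alpha,j}\left(KD_M(p_{i-1},p_i)\right)$ by monotonicity, and then uses precisely your key computation that the logarithmic correction $\frac{|\ln\omega|^{j}}{|\ln K+\ln\omega|^{j}}\to 1$ as $\omega\to 0$, so that the factor $K^{\alpha}$ is all that survives in the limit. The mesh-refinement bookkeeping you flag as a genuine subtlety is in fact passed over silently in the paper, which simply concludes that ``the claim follows from the definition of $\triangle_{\alpha,j}$,'' so your write-up is, if anything, more candid about that step than the published argument.
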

\begin{proof}
Let $\{ p_i \}_{0}^{m}$ be a sequence of points such that $p_{0}=p$ and $p_{m}=q$. Then,
since $F$ is $K$-bilipschitz, we have
\[ \frac{1}{K} D_{M}(p_{i-1},p_{i}) \leq D_{M}(F(p_{i-1}),F(p_{i})) \leq K
D_{M}(p_{i-1},p_{i})  \mbox{ for all  }   i.  \]
Since $\eta_{\alpha,j}$ is monotone, it follows that
\[  \sum_{j=1}^{m} \eta_{\alpha,j}\left(\frac{1}{K}D_M(p_{i-1},p_i)\right)\leq \sum_{j=1}^{m}
\eta_{\alpha,j}(D_M(F(p_{i-1}),F(p_i)))\leq  \sum_{j=1}^{m}  \eta_{\alpha,j}(KD_M(p_{i-1},p_i)). \]
We can estimate \bea
\eta_{\alpha,j}( K D_M(p_{i-1},p_i))&=&\frac{j!\ K^{\alpha} D_M(p_{i-1},p_i)^{\alpha}}{ |\ln K + \ln D_M(p_{i-1},p_i)|^{j}}\\
&=& \frac{j!\ K^{\alpha} D_M(p_{i-1},p_i)^{\alpha}}{|\ln D_M(p_{i-1},p_i)|^{j}} \cdot
\frac{|\ln D_M(p_{i-1},p_i)|^{j}}{|\ln K + \ln D_M(p_{i-1},p_i)|^{j}}. \eea
Since
\[ \lim_{D_{M}(p_{i-1},p_{i}) \rightarrow 0} \frac{|\ln D_M(p_{i-1},p_i)|^{j}}{|\ln K + \ln D_M(p_{i-1},p_i)|^{j}}
\to 1 \] \noindent the claim follows from the definition of $\triangle_{\alpha,j}$. \end{proof}

\begin{lemma}\label{trianglelemma}
Let $p,q \in \partial G_{M}$ be points such that $\x_{\beta,j}$ is the largest coordinate in which $p$
differs from $q$.
Then
\begin{itemize}
\item $(\beta,j) > (\alpha, \ell)$ if and only if
    $\triangle_{\eta_{\alpha,\ell}}(p,q)= \infty$,

\item $(\beta,j) =(\alpha, \ell)$ if and only if $\triangle_{\eta_{\alpha,\ell}}(p,q)= |\x_{\alpha,\ell}|$,

\item $(\beta,j) < (\alpha, \ell)$ if and only if
    $\triangle_{\eta_{\alpha,\ell}}(p,q)= 0$. \end{itemize}
\end{lemma}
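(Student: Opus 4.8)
The plan is to reduce the computation of $\triangle_{\eta_{\alpha,\ell}}(p,q)$ to a purely combinatorial count of the least number of steps needed to join $p$ to $q$ by a chain whose consecutive $D_M$-distances all equal $1/k$. The key observation is that, since every step has the same length $1/k$, each summand $\eta_{\alpha,\ell}(D_M(p_i,p_{i+1}))$ equals $\eta_{\alpha,\ell}(1/k)=\ell!\,k^{-\alpha}/(\ln k)^{\ell}$, so the sum over a chain of $m=m(k)$ steps is exactly $m(k)\cdot \ell!\,k^{-\alpha}/(\ln k)^{\ell}$. Thus $\triangle_{\eta_{\alpha,\ell}}(p,q)$ is controlled by the asymptotics of the minimal step count, and the whole lemma becomes a matching of powers of $k$ and of $\ln k$.

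First I would make precise the per-step reach: for a single displacement $\Delta u$ with $D_M(0,\Delta u)=1/k$, how large can a coordinate $\Delta\x_{\gamma,m}$ be? Setting $t_0=-\ln k$ in the defining inequality~(\ref{DMdefn}), every eigenvalue-$\gamma$ component is at most $1$ in absolute value; because the map sending the coordinates $(\Delta\x_{\gamma,i})_i$ to these components is upper triangular with the explicit $(\pm t_0^{\,i-j}/(i-j)!)$ entries, one inverts this triangular system to bound $|\Delta\x_{\gamma,m}|$. For the top occupied level this gives a sharp estimate of the form $|\Delta\x_{\gamma,m}|\approx C_{\gamma,m}\,k^{-\gamma}(\ln k)^{(\cdots)}$, where the power of $\ln k$ is dictated by the highest power of $t_0$ multiplying $\Delta\x_{\gamma,m}$ in~(\ref{DMdefn}) and the constant $C_{\gamma,m}$ comes from the corresponding factorial. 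Dually, a straight segment moving only in $\x_{\gamma,m}$ realizes this reach, so the estimate is two-sided.

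Next I would count steps. Any chain from $p$ to $q$ must change the top differing coordinate $\x_{\beta,j}$ by $|\Delta\x_{\beta,j}|$, and by the per-step reach no single step changes it by more than the bound above; this yields a lower bound on $m(k)$ valid for every chain, hence a lower bound on $\triangle_{\eta_{\alpha,\ell}}$. An essentially straight chain in the $\x_{\beta,j}$ direction attains this count up to lower-order factors, giving the matching upper bound. Substituting $m(k)$ into $m(k)\cdot \ell!\,k^{-\alpha}/(\ln k)^{\ell}$ produces a quantity proportional to $k^{\beta-\alpha}(\ln k)^{(\text{power from }j)-\ell}\,|\Delta\x_{\beta,j}|$. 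Comparing the eigenvalue exponents first shows divergence when $\beta>\alpha$ and vanishing when $\beta<\alpha$; in the tie $\beta=\alpha$ the power of $\ln k$ decides, giving $\infty$ for $j>\ell$, $0$ for $j<\ell$, and a finite limit at equality, where the factorial from the reach constant cancels against the $\ell!$ in $\eta_{\alpha,\ell}$ to leave exactly $|\x_{\alpha,\ell}|$.

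The main obstacle is the lower bound on $m(k)$: one must rule out exploiting the polynomial coupling between Jordan levels to move the top coordinate more cheaply than a straight segment does. Because each level-component of $e^{-t_0 A}\Delta u$ is bounded only after a signed sum of several coordinates, extracting a bound on $|\Delta\x_{\beta,j}|$ from the individual component bounds requires controlling the inverse of the triangular, Vandermonde-type coordinate-to-component map uniformly in $k$, and verifying that the extremal constant is approached only in the limit. Pinning this constant down exactly, together with the bookkeeping for a multidimensional $\x_{\alpha,\ell}$ (several chains, and complex eigenvalues contributing rotations) so that the borderline value is precisely the norm $|\x_{\alpha,\ell}|$, is the delicate part; the two outer cases are comparatively soft, since there only crude one-sided bounds on $m(k)$ are needed.
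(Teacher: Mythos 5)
Your reduction ``$\triangle_{\eta_{\alpha,\ell}}=(\text{minimal step count})\times\eta_{\alpha,\ell}(1/k)$'' and your upper-bound construction (straight chains, one coordinate at a time, concatenated) are fine and agree with the paper. The genuine gap is in your lower bound, exactly at the spot you flag as delicate, and it is not a technicality that uniform control of the inverse triangular map can fix: your two-sided per-step reach estimate $|\Delta\x_{\beta,j}|\approx j!\,k^{-\beta}/(\ln k)^{j}$ is simply false. Inequality (\ref{DMdefn}) constrains only the \emph{signed sums} over Jordan levels, and these admit cancellation. Concretely, take the step with $\Delta\x_{\beta,j}=k^{-\beta}$, $\Delta\x_{\beta,m}=\pm\frac{(\ln k)^{j-m}}{(j-m)!}k^{-\beta}$ for $m<j$, and all other coordinates (in particular every coordinate above $(\beta,j)$) equal to zero: a binomial identity shows every level component vanishes except the level-$j$ one, which equals $k^{-\beta}$, so this step has $D_M$-size exactly $1/k$ yet moves $\x_{\beta,j}$ by $k^{-\beta}$ --- larger than your claimed reach by the factor $(\ln k)^{j}/j!$ (the price being large displacements of the \emph{lower} coordinates, which your count ignores). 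Inverting the triangular system cannot rescue the claim, since the inverse has polynomially large entries $(\ln k)^{i-m}/(i-m)!$; it is precisely how one builds this counterexample. With the true reach $k^{-\beta}$, your method only gives $m(k)\gtrsim k^{\beta}|\x_{\beta,j}|$, hence a cost lower bound of order $\ell!\,k^{\beta-\alpha}|\x_{\beta,j}|/(\ln k)^{\ell}$. For $\beta>\alpha$ this still diverges, so your outer case survives, but in the tie $\beta=\alpha$ it reads $\ell!\,|\x_{\beta,j}|/(\ln k)^{\ell}$, which tends to $0$: it proves neither $\triangle_{\eta_{\alpha,\ell}}\geq|\x_{\alpha,\ell}|$ when $j=\ell$ nor $\triangle_{\eta_{\alpha,\ell}}=\infty$ when $j>\ell$.

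The missing idea --- and the paper's actual argument --- is to abandon per-step coordinate bounds and use a \emph{linear} lower bound that telescopes. By (\ref{DMdefn}), every step of size $1/k$ satisfies $k^{-\alpha}\geq|c_0|$, where $c_0$ is the bottom-level component of that step within the eigenvalue-$\alpha$ block, so each step's cost is at least $\frac{\ell!}{(\ln k)^{\ell}}|c_0|$. Since $c_0$ is linear in the step, the triangle inequality gives, for any chain whatsoever,
$$\sum_{i}\eta_{\alpha,\ell}\bigl(D_M(p_i,p_{i+1})\bigr)\;\geq\;\frac{\ell!}{(\ln k)^{\ell}}\left|\sum_{\iota\leq j}(-1)^{\iota}\frac{(\ln k)^{\iota}}{\iota!}\,\x_{\beta,\iota}\right|\;=\;\frac{\ell!\,(\ln k)^{j-\ell}}{j!}\,|\x_{\beta,j}|\,\bigl(1+o(1)\bigr),$$
no matter how individual steps cancel: the contributions of coordinates above $(\beta,j)$ drop out after summation because their net change is zero. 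This yields exactly $|\x_{\alpha,\ell}|$ when $(\beta,j)=(\alpha,\ell)$ and $\infty$ when $\alpha=\beta$, $j>\ell$, which are precisely the cases your count misses. Equivalently, if you insist on a step-counting formulation, the quantity that each step can advance by at most $k^{-\alpha}$ is not the top coordinate $\x_{\beta,j}$ but the bottom-level component, whose total required displacement is $\sim(\ln k)^{j}|\x_{\beta,j}|/j!$; counting against that functional recovers the correct $m(k)\gtrsim k^{\alpha}(\ln k)^{j}|\x_{\beta,j}|/j!$. As written, however, your reach-based count cannot be repaired: the cancellation you hoped to rule out genuinely occurs.
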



\begin{proof}
First we will find an upper bound on $\triangle_{\eta_{\alpha,\ell}}(p,q)$. Suppose $p-q=\x_{\beta,j}$.
(i.e. $p$ and $q$ differ only in the $\x_{\beta,j}$ coordinate.)
Then
\begin{equation}\label{onevarDMeq}D_M(p,q)=\max \left\{ \left|\frac{t^j}{j!}  \x_{\beta,j}\right|^{1/\beta},\left|\frac{t^{j-1}}{(j-1)!} \x_{\beta,j}\right|^{1/\beta},\ldots, \left|\x_{\beta,j}\right|^{1/\beta}\right\}.\end{equation}
Let $\{p_i\}$ be a sequence joining $p$ and $q$ such that the subsequent terms differ only in the $\x_{\beta,j}$ coordinate. Write $p_i-p_{i+1}=\x_{\beta,j}^i$.
If we examine Equation \ref{onevarDMeq} when $D_M(p_i,p_{i+1})=1/k$ then we notice that the first term $ |t^j/j!\ \x^i_{\beta,j}|^{1/\beta}$ dominates 
as long as $k$ is large enough. 
Then
$$|\x_{\beta,j}^i|=\frac{j!}{k^\beta |\ln{k}|^j }$$
so that we need
$$\frac{|\x_{\beta,j}|}{|\x_{\beta,j}^i|}= k^\beta |\ln{k}|^j  |\x_{\beta,j}|(1/j!)$$
points in our sequence.
Now consider
\bea
\sum_{i=0}^{ k^\beta |\ln{k}|^j |\x_{\beta,j}| (1/j!)} \eta_{\alpha,\ell}(D_M(p_i,p_{i+1}))
& =& \sum_{i=0}^{ k^\beta |\ln{k}|^j |\x_{\beta,j}| (1/j!)} \eta_{\alpha,\ell}(1/k)
=\sum_{i=0}^{ k^\beta |\ln{k}|^j |\x_{\beta,j}| (1/j!)}  \frac{\ell !}{k^\alpha  |\ln{k}|^\ell }\\
& =& \frac{ k^\beta |\ln{k}|^j |\x_{\beta,j}| \ell ! }{k^\alpha |\ln{k}|^\ell j ! \ }.
\eea
This quantity above is an upper bound for $\triangle_{\eta_{\alpha,\ell}}(p,q)$.
Note that if
\begin{itemize}
\item $\alpha=\beta$
\begin{enumerate}
\item[a)] $\ell=j$ then  $\triangle_{\eta_{\alpha,\ell}}(p,q)\leq|\x_{\beta,j}|$,
\item[b)] $\ell >j$ then $(\ell ! /j!)|\ln{k}|^{j-\ell}|\x_{\beta,j}| \to 0$ as $1/k \rightarrow 0$, so $\triangle_{\eta_{\alpha,\ell}}(p,q)=0$,
\item[c)] $\ell <j$ then $(\ell !/j!)|\ln{k}|^{j-\ell}|\x_{\beta,j}|\to \infty$ as $1/k \rightarrow
0$,
\end{enumerate}
\item $\alpha > \beta$ then $k^{\alpha-\beta}$ dominates $|\ln{k}|^{j-\ell}$ no matter what $j$ and $\ell$ are so $\triangle_{\eta_{\alpha,\ell}}(p,q)=0$,
\item $\alpha < \beta$ then  $k^{\beta- \alpha} |\ln{k}|^{j-l}|\x_{\beta,j}| (\ell ! /j!)\to \infty$.
\end{itemize}
Now if $p$ and $q$ differ in more than one coordinate then we can treat one coordinate at a time and concatenate all resulting sequences to get the desired upper bounds.

To find a lower bound on $\triangle_{\eta_{\alpha,\ell}}(p,q)$, we again look at various
cases. By the above estimates we already know that if the largest coordinate in which $p$
and $q$ differ is $(\beta, j)$ where $\alpha > \beta$  or where $\alpha=\beta$ and  $\ell
> j$ then $\triangle_{\eta_{\alpha,\ell}}(p,q)=0$. To consider the other cases, pick any sequence $\{ p_i\}$ with
$D_M(p_i,p_{i+1})=1/k$. First, if $\beta = \alpha$ then by Equation \ref{DMdefn}
$$\eta_{\alpha,\ell}(D_M(p_i,p_{i+1})) \geq \frac{\ell!}{|\ln{k}|^\ell} \left| \sum_{\iota=0}^j  (-1)^\iota \frac{|\ln k|^\iota}{ \iota !} \x^i_{\beta, \iota} \right|.$$
By applying the triangle inequality we get
\bea
\sum_{i=0}^{m} \eta_{\alpha,\ell}(D_M(p_i,p_{i+1}))&\geq&
\sum_{i=0}^{m} \frac{\ell!}{|\ln{k}|^\ell} \left| \sum_{\iota=0}^j  (-1)^\iota \frac{|\ln k|^\iota}{ \iota !} \x^i_{\beta, \iota} \right|\\
&\geq& \frac{\ell!}{|\ln{k}|^\ell} \left |\sum_{i=0}^{m}  \sum_{\iota=0}^j  (-1)^\iota \frac{|\ln k|^\iota}{ \iota !} \x^i_{\beta, \iota} \right|\\
&\geq& \frac{\ell!}{|\ln{k}|^\ell} \left |\sum_{\iota=0}^j  (-1)^\iota \frac{|\ln k|^\iota}{ \iota !}\sum_{i=0}^{m}  \x^i_{\beta, \iota} \right|\\
&\geq& \frac{\ell!}{|\ln{k}|^\ell} \left |\sum_{\iota=0}^j  (-1)^\iota \frac{|\ln k|^\iota}{ \iota !} \x_{\beta, \iota} \right|.
\eea To get the last inequality we use that $\x_{\beta,j}=\sum_{i=0}^m\x^i_{\beta,j}$.
From these inequalities we can deduce that as $1/k \rightarrow 0$,
\begin{itemize}
\item if $j=\ell$ then $\triangle_{\eta_{\alpha,\ell}}(p,q)\geq  |\x_{\alpha,\ell}|$,
\item if $j>\ell$ then $\triangle_{\eta_{\alpha,\ell}}(p,q)=\infty$.
\end{itemize}

The last case we need to consider is when $\beta> \alpha$. Since we have considered all
other cases we can assume without loss of generality that $p$ and $q$ differ only in
coordinates with exponent greater than $\alpha$. Since $D_M(p_i,p_{i+1})=1/k$ then by
Equation \ref{DMdefn} we have
$$\sum_{i=0}^{m} \eta_{\alpha,\ell}(D_M(p_i,p_{i+1})) \geq \frac{\ell!}{|\ln{k}|^\ell} \sum_{i=0}^m\frac{1}{k^{\alpha}}$$
where $m=k^\beta P(|\ln{k}|)|\x_{\beta,j}|$ for some polynomial $P$. Therefore $\triangle_{\eta_{\alpha,\ell}}(p,q)=\infty$ when $\beta> \alpha$.


\end{proof}
 %
 %

\begin{prop}\label{myfoliationlemma}
Let $F((\x_{\alpha,\ell}))=((\y_{\alpha,\ell}))$ be a $Bilip_{D_M}$ map.  Then for all $(\alpha,\ell)$

\begin{enumerate}
\item $F$ preserves foliations by
$$U_{\alpha,\ell}=span\{\bigcup_{(\beta,j) < (\alpha, \ell)} \B_{\beta,j}\}.$$

\item The image coordinates $\y_{\alpha,\ell}$ are given by
 $$ \y_{\alpha,\ell} = f_{\alpha, \ell} (( \x_{\beta,j})_{(\beta,j) \geq (\alpha, \ell)})
$$ \noindent  where $f_{\alpha,\ell}$, when considered as a function of
$\x_{\alpha,\ell}$, is bilipschitz with respect to the usual Euclidean metric. \end{enumerate} \end{prop}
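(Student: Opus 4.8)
The plan is to leverage Lemma~\ref{trianglelemma} as a coordinate-free
detector of the foliation $\{U_{\alpha,\ell}\}$, and then transport this
structure through $F$ using the bilipschitz invariance supplied by
Lemma~\ref{bilipTriangle}. The key observation is that Lemma~\ref{trianglelemma}
characterizes the position $(\beta,j)$ of the \emph{largest} coordinate in which
two points differ purely in terms of the quantities
$\triangle_{\eta_{\alpha,\ell}}$, which are intrinsic to the metric $D_M$. Since
Lemma~\ref{bilipTriangle} shows each $\triangle_{\eta_{\alpha,\ell}}$ is
distorted by at most a factor of $K^\alpha$ under a $K$-$Bilip_{D_M}$ map, the
trichotomy $\{0,\ \text{finite nonzero},\ \infty\}$ in Lemma~\ref{trianglelemma}
is preserved \emph{exactly} by $F$: a value of $0$ stays $0$, $\infty$ stays
$\infty$, and a finite nonzero value stays finite nonzero.

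\emph{First} I would prove part~(1). Fix $(\alpha,\ell)$ and take
$p,q$ lying in the same leaf of $U_{\alpha,\ell}$, i.e.\ $p-q$ has all
coordinates $(\beta,j) \geq (\alpha,\ell)$ equal to zero, so the largest
coordinate in which they differ is strictly below $(\alpha,\ell)$. By
Lemma~\ref{trianglelemma} this is equivalent to
$\triangle_{\eta_{\alpha,\ell}}(p,q)=0$ \emph{for this particular}
$(\alpha,\ell)$, and more usefully, $p,q$ lie in the same leaf exactly when
$\triangle_{\eta_{\beta,j}}(p,q)=0$ for all $(\beta,j)\geq(\alpha,\ell)$. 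By the
preceding paragraph each such $\triangle$ vanishes at $(F(p),F(q))$ iff it
vanishes at $(p,q)$, so $F(p),F(q)$ again lie in the same leaf of
$U_{\alpha,\ell}$. Running this over all pairs shows $F$ maps each leaf into a
leaf, and applying the same argument to $F^{-1}$ (also $Bilip_{D_M}$) gives that
$F$ permutes the leaves; hence $F$ preserves the foliation.

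\emph{Next}, for part~(2), the leaf-preservation just established says precisely
that the image coordinate $\y_{\alpha,\ell}$ can depend only on those input
coordinates $\x_{\beta,j}$ with $(\beta,j)\geq(\alpha,\ell)$ (varying a
strictly-smaller coordinate moves $p$ within its $U_{\alpha,\ell}$-leaf, which by
part~(1) cannot change the leaf of $F(p)$, hence cannot change
$\y_{\alpha,\ell}$). This gives the claimed triangular form
$\y_{\alpha,\ell}=f_{\alpha,\ell}((\x_{\beta,j})_{(\beta,j)\geq(\alpha,\ell)})$.
For the bilipschitz assertion, I would fix all coordinates except
$\x_{\alpha,\ell}$ and move only that block: then $p-q=\x_{\alpha,\ell}$, so by
Lemma~\ref{trianglelemma} the middle case applies and
$\triangle_{\eta_{\alpha,\ell}}(p,q)=|\x_{\alpha,\ell}|$, the ordinary Euclidean
norm of the displacement. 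The $K^\alpha$-bilipschitz bound from
Lemma~\ref{bilipTriangle} then reads
$K^{-\alpha}|\Delta\x_{\alpha,\ell}| \leq
\triangle_{\eta_{\alpha,\ell}}(F(p),F(q)) \leq K^{\alpha}|\Delta\x_{\alpha,\ell}|$,
and since $F(p),F(q)$ differ at largest coordinate $(\alpha,\ell)$ (using
part~(1) to rule out a jump to a higher block) the same lemma identifies
$\triangle_{\eta_{\alpha,\ell}}(F(p),F(q))$ with $|\Delta\y_{\alpha,\ell}|$. This
is exactly Euclidean $K^\alpha$-bilipschitz control of $f_{\alpha,\ell}$ in the
$\x_{\alpha,\ell}$ variable.

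\emph{The main obstacle} I anticipate is the subtle matching in part~(2):
identifying $\triangle_{\eta_{\alpha,\ell}}(F(p),F(q))$ with
$|\Delta\y_{\alpha,\ell}|$ requires knowing that $F(p)$ and $F(q)$ differ in
$(\alpha,\ell)$ as their \emph{largest} differing coordinate, not one strictly
above it. Part~(1) guarantees they agree on all blocks $(\beta,j)>(\alpha,\ell)$
(those are fixed in $p,q$, so the corresponding leaves are preserved), but one
must also verify $\y_{\alpha,\ell}$ genuinely changes, i.e.\ that
$f_{\alpha,\ell}$ is injective in $\x_{\alpha,\ell}$ — which follows from the
lower bound $\triangle_{\eta_{\alpha,\ell}}(F(p),F(q))\geq
K^{-\alpha}|\Delta\x_{\alpha,\ell}|>0$, forcing the largest differing coordinate
of $F(p),F(q)$ to be exactly $(\alpha,\ell)$ by the trichotomy. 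Once this
bookkeeping is pinned down, the remaining continuity in the other coordinates is
a routine consequence of $F$ being a homeomorphism (being bilipschitz for
$D_M$, which induces the standard topology).
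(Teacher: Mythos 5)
Your proof is correct and follows essentially the same route as the paper: part (1) from the trichotomy of Lemma~\ref{trianglelemma} combined with the $K^{\alpha}$-invariance of $\triangle_{\eta_{\alpha,\ell}}$ from Lemma~\ref{bilipTriangle}, and part (2) by moving only the $\x_{\alpha,\ell}$ block within a fixed $U_{(\alpha,\ell)+1}$ coset and matching $\triangle_{\eta_{\alpha,\ell}}$ on both sides. Your explicit bookkeeping (ruling out that the image difference jumps to a strictly smaller block via the positive lower bound) is exactly the step the paper leaves implicit, so the two arguments coincide.
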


\begin{proof}
The first claim follows from Lemmas \ref{bilipTriangle} and \ref{trianglelemma}.  For
the second claim, let $(\alpha,\ell)+1$ be the smallest index bigger than
$(\alpha,\ell)$.  Take two points $p,q$ belonging to the same $U_{(\alpha, \ell)+1}$
coset, but differing in the $(\alpha,\ell)$-th coordinate.  By the first claim, we
know that $F(p)$ and $F(q)$ belong to the same $U_{(\alpha,\ell)+1}$ coset as well.
By Lemma \ref{trianglelemma}
$$\triangle_{\alpha,\ell}(p,q)= |\x_{\alpha,\ell}| \textrm{ and } \triangle_{\alpha,\ell}(F(p),F(q))=|\y_{\alpha,\ell}|,$$  and so the result follows from Lemma
\ref{bilipTriangle}.  \end{proof}

\begin{lemma}\label{XitheMap}
Suppose $p$ and $q$ differ only in the $\x_{\beta,j}$ coordinates. Then for all $(\alpha, \ell) < (\beta,j)$,
$$ |f_{\alpha,\ell} (p) - f_{\alpha,\ell} (q)|< \themap( |\x_{\beta,j}| ) $$ where
$f_{\alpha,\ell}(p)$ is the $(\alpha,\ell)$-th coordinate of $F(p)$,  $\x_{\beta,j}=p-q$
and $\themap$ is a nondecreasing function that goes to zero as its input approaches zero.
\end{lemma}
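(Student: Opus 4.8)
The plan is to turn the statement into a purely metric estimate and to read off the coordinate difference directly from the defining inequality \ref{DMdefn} applied to the pair $F(p),F(q)$. Write $\Delta\y_{\alpha,\ell}:=f_{\alpha,\ell}(p)-f_{\alpha,\ell}(q)$ for the quantity to be bounded. First I would record, via Proposition \ref{myfoliationlemma}, the structural fact that each image coordinate $\y_{\gamma,k}=f_{\gamma,k}((\x_{\delta,i})_{(\delta,i)\ge(\gamma,k)})$ depends only on coordinates $\ge(\gamma,k)$; since $p$ and $q$ differ only in $\x_{\beta,j}$ with $(\beta,j)>(\alpha,\ell)$, this both confirms that $F(p)$ and $F(q)$ agree in every coordinate $>(\beta,j)$ and makes clear that $\Delta\y_{\alpha,\ell}$ is genuinely a function of the single input $\x_{\beta,j}$. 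This framing is not strictly needed for the bound, but it shows the estimate is the right one.

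The key step is the extraction. Setting $s_0=\ln D_M(F(p),F(q))$, the quantity $e^{-\alpha s_0}|\Delta\y_{\alpha,\ell}|$ appears as one of the terms in the maximum of \ref{DMdefn} for the pair $F(p),F(q)$: it is the term with eigenvalue $\alpha$ whose inner sum reduces to the single summand $i=\ell$ (i.e.\ the choice $j=\ell$). By the definition of $s_0$ as the smallest height at which the maximum is $\le 1$, every term is $\le 1$ at $t=s_0$, so in particular
\[ |f_{\alpha,\ell}(p)-f_{\alpha,\ell}(q)|=|\Delta\y_{\alpha,\ell}|\le e^{\alpha s_0}=D_M(F(p),F(q))^{\alpha}. \]
Crucially this bound involves only $\alpha$ and the distance $D_M(F(p),F(q))$, with no dependence on the base point, which is exactly what a uniform modulus of continuity demands.

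Next I would push this through the bilipschitz hypothesis and the one-variable metric. Since $F$ is $K$-bilipschitz, $D_M(F(p),F(q))\le K\,D_M(p,q)$, so $|\Delta\y_{\alpha,\ell}|\le K^{\alpha}D_M(p,q)^{\alpha}$. Because $p-q=\x_{\beta,j}$ is supported in a single coordinate, $D_M(p,q)$ is given by \ref{onevarDMeq}; equivalently, $D_M(p,q)=e^{t_0}$ with $t_0$ the smallest height at which $\|e^{-t_0A}(p-q)\|\le 1$. From this characterization $D_M(p,q)$ is nondecreasing in $|\x_{\beta,j}|$ and tends to $0$ as $|\x_{\beta,j}|\to 0$, so there is a nondecreasing $g$ with $g(w)\to 0$ as $w\to 0$ and $D_M(p,q)\le g(|\x_{\beta,j}|)$. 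Since $\alpha>0$, setting
\[ \themap(w)=\max_{(\alpha,\ell)<(\beta,j)}K^{\alpha}\,g(w)^{\alpha} \]
— a maximum over the finitely many index pairs below $(\beta,j)$ — produces a single nondecreasing function with $\themap(w)\to 0$ as $w\to 0$ that dominates all the coordinate differences simultaneously, giving the claimed inequality.

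The one genuinely delicate point is the monotonicity-and-vanishing claim for the one-variable distance, since \ref{onevarDMeq} defines $D_M(p,q)$ implicitly (the height $t$ inside equals $\ln D_M(p,q)$) and $\x_{\beta,j}$ is a vector rather than a scalar. I would dispatch this directly from the infimum characterization of $t_0$: each term of the maximum in \ref{DMdefn} is monotone in the corresponding coordinate difference, so the height at which the maximum first drops to $1$ moves monotonically with $|\x_{\beta,j}|$ and to $-\infty$ as $|\x_{\beta,j}|\to 0$, whence $D_M(p,q)\to 0$. Everything else is an immediate consequence of the single-term lower bound in \ref{DMdefn} together with Proposition \ref{myfoliationlemma}.
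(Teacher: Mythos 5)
Your extraction step misreads the metric, and the misreading hides exactly the difficulty this lemma exists to solve. The quantities appearing in the maximum in \ref{DMdefn} are the coordinates of $e^{-tA}$ applied to the difference vector, and because $A$ has nontrivial Jordan blocks, the coordinate whose leading entry is $\Delta\y_{\alpha,\ell}=f_{\alpha,\ell}(p)-f_{\alpha,\ell}(q)$ is not $e^{-\alpha t}|\Delta\y_{\alpha,\ell}|$ but
$$e^{-\alpha t}\Bigl|\Delta\y_{\alpha,\ell}+\sum_{\ell<k\le \ell_\alpha}(-1)^{k-\ell}\tfrac{t^{k-\ell}}{(k-\ell)!}\,\Delta\y_{\alpha,k}\Bigr|,$$
where the sum necessarily runs to the top $\ell_\alpha$ of the Jordan chain: a row of $e^{-tA}$ mixes a given coordinate with all higher-nilpotency coordinates in its chain. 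The single-summand term you invoke (``the choice $j=\ell$'') exists only when $\ell=\ell_\alpha$. The notation $\max_{\alpha,\ell,j}$ in \ref{DMdefn} is loose, but the paper's own use of it — both the lower bounds in Lemma \ref{trianglelemma} and the displayed inequality in the proof of this very lemma — confirms the mixed form. Since $F$ is merely triangular, the differences $\Delta\y_{\alpha,k}$ for $\ell<k\le\ell_\alpha$ with $(\alpha,k)\le(\beta,j)$ are in general nonzero, so what actually follows from ``every term is $\le 1$ at $t=s_0$'' is
$$|\Delta\y_{\alpha,\ell}|\ \le\ D_M(F(p),F(q))^{\alpha}+\sum_{\ell<k\le\ell_\alpha}\tfrac{|s_0|^{k-\ell}}{(k-\ell)!}\,|\Delta\y_{\alpha,k}|,$$
and the factors $|s_0|^{k-\ell}=|\ln D_M(F(p),F(q))|^{k-\ell}$ blow up precisely in the limit $|\x_{\beta,j}|\to 0$ that you care about. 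Your clean bound $|\Delta\y_{\alpha,\ell}|\le K^{\alpha}D_M(p,q)^{\alpha}$ is therefore unjustified except at the top of each Jordan chain; this coordinate mixing is exactly where the non-diagonal case differs from \cite{Dy}.

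The remaining ingredients of your argument — the bilipschitz comparison $D_M(F(p),F(q))\le K D_M(p,q)$, the monotonicity and vanishing of the one-variable distance (the paper's claim about $\Upsilon$), and taking a maximum over the finitely many indices below $(\beta,j)$ — are all correct and coincide with the paper's. But the gap above is not cosmetic: the paper closes it by a downward induction on $\ell$ within each Jordan block. The base case $\ell=\ell_\alpha$ is precisely your argument (no cross terms), and the inductive step absorbs the cross terms using a strengthened hypothesis: the bounding function $\themap'$ obtained for the higher coordinates satisfies $|t|^{s}\themap'(\x_{\beta,j})\to 0$ for every power $s$, which holds because $\Upsilon(\x_{\beta,j})^{\alpha}=e^{\alpha t}$ decays exponentially in $|t|$ and so beats the polynomial factors $(|t|+\e)^{k-\ell}$. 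Without this induction, or something playing the same role, the statement does not follow from \ref{DMdefn}.
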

\begin{proof}
To aide with the
exposition we will define
$$\Upsilon( \x_{\beta,j}):=\max \{ |t^j/j!
\x_{\beta,j}|^{1/\beta},|t^{j-1}/(j-1)! \x_{\beta,j}|^{1/\beta},\ldots,
|\x_{\beta,j}|^{1/\beta}\}$$
where $t=\ln(D_M(p,q))$.\\

\noindent{\bf Claim:}  $\Upsilon(\x_{\beta,j})$ is non decreasing and goes to
$0$ as $|\x_{\beta,j}| \to 0 $.\\

\noindent{}The claim can be verified by a simple calculation.
Note that when $p$ and $q$ differ only in the $\x_{\beta,j}$ coordinate then $D_M(p,q)=
\Upsilon( \x_{\beta,j})$ and so, since $F$ is $Bilip_{D_M}$,
$$D_M(F(p),F(q)) \leq K \Upsilon( \x_{\beta,j}).$$
However, we also have
$$ D_M(F(p),F(q))^\alpha  \geq | f_{\alpha, \ell} (p)- f_{\alpha,
\ell} (q)  + \sum_{\ell < k \leq \ell_\alpha } (-1)^{(k-\ell)}
\frac{t'^{k-\ell}}{(k-\ell)!} ( f_{\alpha, k} (p) - f_{\alpha, k} (q))|$$
where $t- \e \leq t' \leq t + \e$ and $\ell_\alpha$ is the size of the largest Jordan
block with eigenvalue $\alpha$ (in other words the largest nilpotence degree
associated to the eigenvalue $\alpha$). Combining the above two inequalities we get
\begin{equation}\label{bigeqn}
| f_{\alpha, \ell} (p)- f_{\alpha, \ell} (q) | \leq K^\alpha
\Upsilon( \x_{\beta,j})^\alpha +  \sum_{\ell < k \leq \ell_\alpha }
|\frac{t'^{k-\ell}}{(k-\ell)!} ( f_{\alpha, k} (p) - f_{\alpha, k} (q))|.
\end{equation}
Note that when $ \ell = \ell_\alpha$ then
$$| f_{\alpha, \ell} (p)- f_{\alpha, \ell} (q) | \leq K^\alpha
\Upsilon( \x_{\beta,j})^\alpha. $$
By the claim above, when $p$ and $q$ differ only in the $\x_{\beta,j}$ coordinate and $\ell=\ell_\alpha$, we can take $$\themap( \x_{\beta,j}) =  K^\alpha
\Upsilon( \x_{\beta,j})^\alpha.$$

At this point note that $ \Upsilon(
\x_{\beta,j})=D_M(p,q)=e^t$ so that if $\Upsilon( \x_{\beta,j}) \to 0$ then $|t|^s \Upsilon( \x_{\beta,j}) \to 0$ for any $s$.
Using Equation \ref{bigeqn}, we can proceed inductively to show that we can find a function $\themap(\x_{\beta, j})$ such that
$$| f_{\alpha, \ell} (p)- f_{\alpha, \ell} (q) | \leq \themap(\x_{\beta, j})$$
with the property that $|t|^s \themap(\x_{\beta, j}) \to 0$ as $|\x_{\beta, j}| \to 0$ for any power of $|t|$. We
assume that for $k> \ell$
$$|f_{\alpha, k} (p) - f_{\alpha, k} (q))| < \themap'(\x_{\beta, j}).$$
where $\themap'$ is a map with the above desired properties.
By inequality \ref{bigeqn} we have that \bea | f_{\alpha, \ell} (p)- f_{\alpha, \ell}
(q) | & \leq & K^\alpha \Upsilon( \x_{\beta,j})^\alpha +  \sum_{\ell < k \leq
\ell_\alpha } |\frac{t'^{k-\ell}}{(k-l)!} ( f_{\alpha, k} (p) - f_{\alpha, k}
(q))|\\
& \leq & K^\alpha  \Upsilon( \x_{\beta,j})^\alpha +  \sum_{\ell < k \leq \ell_\alpha
} \frac{(|t|+ \e)^{k-\ell}}{(k-l)!} \themap'(\x_{\beta,j}). \eea
Therefore setting
$$\themap(\x_{\beta,j}) = K^\alpha  \Upsilon( \x_{\beta,j})^\alpha +
\sum_{\ell < k \leq \ell_\alpha } \frac{(|t|+ \e)^{k-\ell}}{(k-l)!}
\themap'(\x_{\beta,j})$$
gives us the desired bounding map.
\end{proof}

\subsection{Relating $(\R^n, D_M)$ to $\partial G_M$.}\label{boundary}
To understand the connection between $D_M$ and $G_M$ we need the notions \emph{height-respecting} isometries and quasi-isometries of $G_M$.
We define a \emph{height} function $h:G_M \to G_M$ by setting $h(t,p)=t$ for each $(t,p) \in G_M$. We say $f$ is a \emph{height-respecting} isometry (resp. quasi-isometry) if it permutes level sets of the height function (resp. permutes level sets up to a bounded distance). By \cite{FM3} this condition actually ensures that $f$ induces (resp. induces up to a bounded distance) a translation map on the height factor. In this section
we show that a height-respecting quasi-isometry of $G_M$ induces a $QSim_{D_M}$ maps of $\partial G_M$. This argument also appears in \cite{Dy} for the diagonalizable case.

\begin{lemma}\label{hrlemma} A height-respecting quasi-isometry (resp. isometry) of $G_M$ induces
a $QSim_{D_M}$ map (resp. $Sim_{D_M}$ map) of $\partial G_M \simeq \R^n$. \end{lemma}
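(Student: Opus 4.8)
The plan is to show that a height-respecting quasi-isometry $\phi$ of $G_M$ induces a well-defined boundary map $\partial\phi$ on $\partial G_M \simeq \R^n$ and then to verify that this boundary map is $Bilip_{D_M}$ up to a similarity, i.e. $QSim_{D_M}$. First I would recall that any quasi-isometry of a Gromov hyperbolic space (and $G_M$ is negatively curved, hence Gromov hyperbolic) extends continuously to a homeomorphism of the visual boundary; this is standard boundary extension theory for hyperbolic spaces. Since $\phi$ is height-respecting, it permutes level sets of $h$ up to bounded distance, and by \cite{FM3} it induces a translation $t \mapsto t + s$ on the height factor for some fixed $s \in \R$. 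The key consequence is that $\phi$ fixes the special boundary point $\infty$ (the point toward which height increases), so the induced boundary map restricts to a self-map $\partial\phi$ of $\partial G_M \setminus \{\infty\} \simeq \R^n$.

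The main step is to control how $\partial\phi$ distorts the metric $D_M$. I would use the horospherical description underlying the definition of $D_M$: recall $D_M(p,q) = e^{t_0}$ where $t_0$ is the smallest height at which the two vertical geodesics based at $p$ and $q$ become $d_{t_0,M}$-close (within distance $1$). The quantity $t_0$ is, up to additive bounded error, determined by the coarse geometry of $G_M$ near $\infty$ — specifically by the height at which the geodesics asymptotic to $p$ and $q$ coalesce as seen from the horosphere $\mathcal H = \{t=0\}$, as in the Euclid-Cygan computation in the Remark. A quasi-isometry moves this coalescence height by a bounded additive amount plus the fixed translation $s$ coming from the height action. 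Translating $t_0$ by an additive constant $c$ multiplies $e^{t_0}$ by $e^c$, which is precisely a $Sim_{D_M}$ factor, while the bounded additive error contributes a multiplicative constant on $D_M$, which is precisely the $Bilip_{D_M}$ part. Concretely, I would establish an estimate of the form
\[
\tfrac{1}{K}\, e^{s} D_M(p,q) \leq D_M(\partial\phi(p), \partial\phi(q)) \leq K\, e^{s} D_M(p,q),
\]
where $K$ depends only on the quasi-isometry constants of $\phi$ and the hyperbolicity constant of $G_M$; this exhibits $\partial\phi$ as a $Sim_{D_M}$ map (the factor $e^s$) composed with a $Bilip_{D_M}$ map (constant $K$), which is exactly the definition of a $QSim_{D_M}$ map.

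For the isometry case, the quasi-isometry constants are trivial ($K=1$ and no additive error), so the bounded error term vanishes and one obtains $D_M(\partial\phi(p), \partial\phi(q)) = e^{-s} D_M(p,q)$ exactly, giving a genuine $Sim_{D_M}$ map. The hard part will be making the comparison between the additive displacement of the coalescence height $t_0$ and the quasi-isometry constants fully rigorous: one must show that $t_0$ is a coarsely quasi-isometry-invariant quantity, i.e. that nearby-at-height-$t_0$ geodesics are sent to geodesics that become nearby at height $t_0 + s$ up to a bounded error controlled only by the quasi-isometry constants. This is essentially a Morse-type lemma argument (quasi-geodesics fellow-travel geodesics in a hyperbolic space) combined with the bilipschitz equivalence of $d_L$ and $d_R$ established earlier, so that all estimates can be carried out in the explicit metric $d_L$ rather than the Riemannian $d_R$. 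Since \cite{Dy} carries out exactly this argument in the diagonalizable case and the coordinate expression for $D_M$ and the definition of $d_L$ are identical in form in the non-diagonal setting, I expect the same argument to transfer with only cosmetic changes, so I would cite \cite{Dy} for the detailed estimates and emphasize only the points where the Jordan-block structure of $M$ enters.
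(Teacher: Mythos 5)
Your proposal follows essentially the same route as the paper's proof: both arguments track the coalescence height $T$ (your $t_0$) at which the vertical geodesics over $p,q$ come within distance one, show that a height-respecting quasi-isometry shifts this height by a fixed translation $a$ (your $s$) up to a bounded additive error $\epsilon$, and exponentiate to get $e^{-\epsilon}e^{a}D_M(p,q) \leq D_M(F(p),F(q)) \leq e^{\epsilon}e^{a}D_M(p,q)$, with the isometry case collapsing to an exact similarity. The paper fills in the step you flag as ``the hard part'' by citing \cite{EFW} for the fact that vertical geodesics go to bounded neighborhoods of vertical geodesics and by decomposing the height-respecting quasi-isometry as an isometry composed with a quasi-isometry coarsely preserving each height level set, which is a concrete substitute for your Morse-lemma appeal.
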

\begin{proof}
Since a height-respecting quasi-isometry $\varphi$ of $G_{M}$ necessarily sends vertical
geodesics to (bounded neighborhoods) of vertical geodesics (see \cite{EFW}), $\varphi$ induces a well defined map $F$ on $\partial G_M \simeq \R$.
It follows that if $T$
(resp. $T'$) is the height at which the vertical geodesics emanating from $p$ and $q$
(resp. $F(p)$ and $F(q)$) are at distance one apart then
\[ \| M^{-T} (p-q) \| =O(1)  \mbox{  if and only if  } \| M^{-T'} \left( F(p)-F(q) \right) \| = O(1). \]
Now a height-respecting quasi-isometry of $G_M$ is just the composition of an isometry and a
quasi-isometry that fixes the identity and sends height level sets to bounded neighborhoods of height level sets. It
follows that there is a constant $a$ (depending on the isometry) and some $\epsilon$
(depending on the additive constant of the quasi-isometry) such that
\[ T+a -\epsilon \leq T' \leq T+a + \epsilon. \]

\noindent This implies that

\[ e^{-\epsilon} e^{a} e^{T}  \leq  e^{T'} \leq  e^{\epsilon} e^{a} e^{T} \]

\noindent and so \[ e^{-\epsilon} e^{a} D_{M}(p,q) \leq D_{M}(F(p),F(q)) \leq
e^{\epsilon} e^{a} D_{M}(p,q) \] \noindent by the definition of $D_{M}$.
\end{proof}

\section{Proof of Theorem \ref{tukia}  }\label{tukiasection}
For the most part, the proof of Theorem \ref{tukia} in the case when $M$ is not diagonalizable is very similar to the proof when $M$ is diagonalizable.
In this section, we will give an outline of the proof of Theorem \ref{tukia} and only fill in the details when the proofs of the two cases differ.
For a complete proof of Theorem \ref{tukia} see \cite{Dy}.

Given a matrix $M$ we coordinatize $V \simeq \R^n$ with respect to the basis $\mathcal{B}$ as before, according to eigenspaces and nilpotencey degrees and we write $\x=(\x_{\alpha, \ell})$. Note that with respect to this basis $\mathcal{B}$ the matrix $M$ is in a permuted Jordan form.
%
To bring our notation closer in line with the notation in \cite{Dy} we assign a number to each pair $(\alpha, \ell)$
according to the prescribed order, and we write instead $((\x_{\alpha,\ell}))=(\x_1, \ldots, \x_r)$. We know by Property 1 of Proposition \ref{myfoliationlemma}
that any $QSim_{D_M}$ map $G$ has the triangular form
$$G(\x_1, \ldots , \x_r)= (g_1(\x_1, \ldots, \x_r), \ldots, g_r(\x_r)).$$
Note that by Property 2 of Proposition \ref{myfoliationlemma} each map $g_i(\x_i, \ldots, \x_r)$ is bilipschitz in $\x_i$.
Furthermore, if we restrict to the map
$$G_i(\x_i, \ldots , \x_r) :=  (g_i(\x_i, \ldots, \x_r), \ldots, g_r(\x_r))$$
then $G_i$ is a $QSim_{D_{M_i}}$ map where $M_i$ is the submatrix of $M$ corresponding to the $(\x_i, \ldots, \x_r)$ coordinates.
This allows us to set up an induction argument in the same way as is done in Section 3 of
\cite{Dy}.

The base case is to consider  $$G_r(\x_r)=(g_r(\x_r)).$$
This action of $\G$ is a uniform quasisimilarity action on $V/(\oplus_{i=1}^{r-1} V_i)  \simeq V_r$. Therefore we can conjugate this action
to an action by similarities (see \cite{Dy}). To set up the induction step we write our map as $$G(\x,\y)=(g_\y(\x), g(\y))$$
where we assume that $g(\y)$ is an $ASim_{D_{M'}}$ for the appropriate $M'$ and $g_\y$ is bilipschitz in $\x$.
In fact this is all that is needed to prove the following proposition which is really the first part of the induction step of Theorem 2
in \cite{Dy} (Theorem \ref{tukia} here.)
\begin{prop}\label{simprop}
Let $\G$ be a group of $QSim_{D_M}$ maps that have the form
$$G(\x,\y)=(g_\y(\x), g(\y))$$
where $g(\y)$ is an $ASim_{D_{M'}}$ for the appropriate $M'$ and that $g_\y(\x)$ is bilipschitz in $\x$.
Then there exists a $QSim_{D_M}$ map $F$ such that each
$$\tilde{G} \in F \G F^{-1}$$
acts by $QSim_{D_M}$ maps of the form
$$\tilde{G}(\x, \y)= (\tilde{g}_\y(\x), \tilde{g}(\y))$$
where $\tilde{g}_\y$ is now a similarity as a function of $\x$ and $\tilde{g}(\y)$ is still an $ASim_{D_M}$ map.
\end{prop}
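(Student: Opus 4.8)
The plan is to run a Tukia-style straightening of the fibers in the $\x$-direction, exploiting that the base map $g(\y)$ is already an $ASim_{D_{M'}}$ map and so will be left untouched. Write the $\x$-leaf as $\R^d$ with its Euclidean metric. By Proposition \ref{myfoliationlemma} the fiber map $g_\y:\R^d\to\R^d$ is $K$-bilipschitz for a constant $K$ uniform over all $G\in\G$; hence by Rademacher's theorem each $g_\y$ is differentiable almost everywhere, with derivative $Dg_\y(\x)$ lying in a fixed $K$-bounded subset of $GL(d,\R)$. The whole point is to find a fiberwise conjugacy that turns these derivatives into conformal (scalar-times-orthogonal) matrices.

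First I would record the distortion of each fiber map as a point of the space $P(d)=GL(d,\R)/(\R_{+}\cdot O(d))$ of inner products up to scale, a symmetric space of noncompact type and hence $\mathrm{CAT}(0)$. Let $\mu_0$ denote the standard structure, and for $G\in\G$ let $G^{\ast}\mu_0$ denote its pullback, so that $(G^{\ast}\mu_0)(\x,\y)=Dg_\y(\x)\cdot\mu_0$, where $GL(d,\R)$ acts on $P(d)$ by the isometric action $A\cdot[S]=[A^{\ast}SA]$. The chain rule together with the composition law $G\circ G'(\x,\y)=(g_{g'(\y)}(g'_\y(\x)),\,g(g'(\y)))$ gives the cocycle identity $(HG)^{\ast}\mu_0=G^{\ast}(H^{\ast}\mu_0)$, and the uniform bilipschitz bound confines every value $(G^{\ast}\mu_0)(\x,\y)$ to a fixed ball $B_K\subset P(d)$. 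I would then build a $\G$-invariant measurable conformal structure $\mu$ by a pointwise barycenter: for each $(\x,\y)$ the orbit $S(\x,\y)=\{(G^{\ast}\mu_0)(\x,\y):G\in\G\}$ is a bounded subset of the $\mathrm{CAT}(0)$ space $P(d)$, so it has a unique circumcenter, which I take to be $\mu(\x,\y)$. Since $\G$ is a group, the cocycle identity yields $Dg_\y(\x)\cdot S(G(\x,\y))=S(\x,\y)$, and as the isometry $Dg_\y(\x)$ carries circumcenters to circumcenters this gives $(G^{\ast}\mu)(\x,\y)=\mu(\x,\y)$, i.e. $\mu$ is $\G$-invariant (measurability of $\mu$ is routine).

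Finally I would straighten $\mu$: solve, fiberwise in $\y$, for a family $F_\y$ of bilipschitz homeomorphisms of $\R^d$ with $F_\y^{\ast}\mu_0=\mu(\cdot,\y)$, and set $F(\x,\y)=(F_\y(\x),\y)$. A short computation using the invariance of $\mu$ then shows $(\tilde g_\y)^{\ast}\mu_0=\mu_0$ for $\tilde g_\y=F_{g(\y)}\circ g_\y\circ F_\y^{-1}$, so each $\tilde g_\y$ is $1$-quasiconformal and therefore a Euclidean similarity of $\R^d$. Since $F$ fixes the $\y$-factor we have $\tilde g(\y)=g(\y)$, which remains $ASim_{D_M}$; and one checks via the triangular structure together with Lemma \ref{XitheMap} that $F$ is a genuine $QSim_{D_M}$ map, not merely a fiberwise bilipschitz map.

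I expect the straightening step to be the main obstacle. When $d\geq 3$ there is no measurable Riemann mapping theorem, so a bounded measurable conformal structure need not be integrable by any bilipschitz map; the argument must use the rigidity of the $D_M$ metric — in particular the growth and continuity constraints of Lemma \ref{XitheMap} and the compatibility of the foliation — to guarantee that the invariant structure $\mu$ is actually integrable and that the maps $F_\y$ fit together, with the correct dependence on $\y$, into one global $D_M$-bilipschitz map. Low-dimensional or otherwise exceptional leaves, where the conformal-structure argument is vacuous, are instead handled by the uniform quasisimilarity argument already used for the base case in \cite{Dy}. Reconciling the almost-everywhere-defined invariant structure with an honest $QSim_{D_M}$ conjugacy that is controlled simultaneously in all coordinates is the delicate point, and is exactly where one leans on the earlier structural results and on the corresponding step in \cite{Dy}.
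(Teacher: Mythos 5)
Your opening move is faithful to how the proof the paper relies on actually begins: the paper's entire proof of this proposition is a citation to Sections 3--3.5 of \cite{Dy}, which in turn follow Tukia \cite{T}, and Tukia's first step is exactly your construction of an invariant measurable conformal structure via a.e.\ derivatives, the cocycle identity, and circumcenters in the $\mathrm{CAT}(0)$ space $P(d)$. The genuine gap is the step you yourself flag as ``the main obstacle'': it is not a technicality to be patched but the actual content of the theorem, and the real proof does something different there. Tukia's method never integrates $\mu$. There is no measurable Riemann mapping theorem for $d\geq 3$, so a fiberwise solution of $F_\y^{\ast}\mu_0=\mu(\cdot,\y)$ need not exist; worse, Tukia's own examples of uniformly quasiconformal groups of $\R^d$, $d\geq 3$, that are \emph{not} quasiconformally conjugate to M\"obius groups show that the mere existence of an invariant measurable conformal structure can never suffice. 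What the cited proof does instead is: take a point of approximate continuity of $\mu$, zoom in at that point by the standard dilations, and use the cocompactness of the action (a hypothesis of Theorem \ref{tukia} that your outline never invokes, which is itself a warning sign) to compose group elements with dilations so that a subsequence converges to the conjugating map $F$; the conjugated group then preserves a constant conformal structure, and one concludes by Liouville-type rigidity that the fiber maps are similarities. This limiting construction is also what makes $F$ automatically a global $QSim_{D_M}$ map compatible with the triangular structure---the point you concede is ``delicate.'' Note finally that the one modification this paper must make to \cite{Dy}, namely that the dilations are $\delta_t=M^{\ln t}$ rather than $\delta_t(\x)=((t^{\alpha_i}\x_i))$, lives precisely inside this zooming step, which your proposal never reaches.

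The second concrete failure is the low-dimensional fibers, which you cannot set aside as ``exceptional.'' In this paper's setting the blocks $\B_{\alpha,\ell}$ are very often one-dimensional (in the diagonal case with distinct eigenvalues, \emph{all} of them are), and then $P(1)$ is a single point: $\mu$ carries no information, every homeomorphism preserves it, and ``$1$-quasiconformal hence similarity'' is vacuous. Your proposed fallback---``handled by the uniform quasisimilarity argument already used for the base case in \cite{Dy}''---is circular, because that base-case argument \emph{is} the Tukia zooming argument (run for the scale/derivative cocycle rather than the shape cocycle), i.e.\ exactly the machinery your approach was meant to replace; moreover it cannot be quarantined leaf by leaf, since the proposition requires a single conjugacy $F$, built in one induction, that is a quasisimilarity in all coordinates simultaneously. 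As written, then, your argument is complete only for two-dimensional fibers, where Ahlfors--Bers integration is available; everywhere else it defers to the proof it was trying to avoid.
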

The proof of this proposition follows Section 3 up to Section 3.5 in \cite{Dy}.  The only observation that needs to be made is that
the standard dilitation $\delta_t(\x)$ which in \cite{Dy} was $ \delta_t(\x)=((t^{\alpha_i} \x_i)_{1\leq i \leq r})$ but now is $\delta_t(\x)=M^{\ln{t}}(\x)$. (Note that the two definitions coincide if $M$ is diagonal.)\\

\noindent{\bf Uniform constants.} After Proposition \ref{simprop} we have an action of $\G$ by maps of the form
$$ G(\x,\y)= (\lambda_\y A_\y(\x+B_\y), g(\y))$$
where $\lambda_\y\in \R$ and $A_\y$ is a rotation matrix.
We need to remove the dependance on $\y$ of $\lambda$ and $A$.  Again, as long as we interpret $\delta_t$ as $M^{\ln{t}}$ we can use the same proof for uniform multiplicative constant as in \cite{Dy}. The proof for uniform rotation constant, however, needs to be modified slightly, so we present it here.\\

\noindent{}{\bf Uniform rotation constant.}\label{rotation} At this point, we have a group $\G$ where each element has the form
\begin{equation}\label{By}
G(\x,\y)=M^s(g_\y(\x),g(\y))
	            =M^s(A_{\y}(\x +B_{\y}),g(\y))
\end{equation} 	
where $A_{\y} \in O(n)$, and $g(\y)$ is an $ASim_{D_{M'}}$ map, and $s \in
\mathbb{R}$ dependson the group element $G \in \G$.  The goal of the following proposition is
to show that $A_{\y}$ does not depend on $\y$. We do this by showing that if $A_{\y}
\neq A_{\y'}$ then $G$ is not a $K$-$QSim_{{D}_M}$ map.

\begin{prop}\label{rotconstprop} Let $G(\x,\y)=M^s( A_{\y}(\x +B_{\y}), g(\y) )$
be a $K$-$QSim_{{D}_M}$ map as above. Then $A_{\y}=A_{\y'}$ for all $\y,\y' \in \oplus_{j=i+1}^r V_j$.
\end{prop}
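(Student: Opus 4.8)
The plan is to argue by contradiction, following the hint that precedes the statement: assuming $A_{\y}\neq A_{\y'}$, I will produce points whose $D_M$-distance is held fixed while their images are pushed arbitrarily far apart, contradicting the $K$-bilipschitz bound. The two structural facts I will rely on are that $D_M(p,q)$ depends only on the difference $p-q$ (translation invariance, immediate from $D_M(p,q)=e^{t_0}$ with $t_0$ determined by $\|e^{-t_0 A}(p-q)\|$), and that $D_M$ is proper, i.e.\ $D_M(P,Q)\to\infty$ whenever $\|P-Q\|\to\infty$ (since $A$ has positive eigenvalues, $e^{-tA}$ contracts, so the defining height $t_0$ must tend to $\infty$). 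I will also use that $M^s$ preserves the $\x$-block $V_i$ and is block upper-triangular for the $(\x,\y)$-splitting, with invertible diagonal block $(M^s)_{\x\x}$ acting on the $\x$-coordinates.

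First I fix $\x$ together with a displacement $v$ in the $\x$-block, and compare the pair $p_v=(\x+v,\y)$, $q_v=(\x+v,\y')$. Because the same shift $v$ is applied to both $\x$-coordinates, the difference $p_v-q_v=(0,\y-\y')$ does not depend on $v$, so $D_M(p_v,q_v)$ equals a constant $c_0$ independent of $v$. Writing out the image difference $G(p_v)-G(q_v)$ with the block form of $M^s$, its $\y$-block is $v$-independent (a fixed multiple of $g(\y)-g(\y')$), while its $\x$-block has the form
\begin{equation*}
(M^s)_{\x\x}\bigl[(A_{\y}-A_{\y'})(\x+v)+A_{\y}B_{\y}-A_{\y'}B_{\y'}\bigr]+(M^s)_{\x\y}\bigl(g(\y)-g(\y')\bigr),
\end{equation*}
in which the only $v$-dependent term is $(M^s)_{\x\x}(A_{\y}-A_{\y'})v$. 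Choosing $v$ in a direction with $(A_{\y}-A_{\y'})v\neq 0$ --- possible exactly because $A_{\y}\neq A_{\y'}$ --- and letting $\|v\|\to\infty$ along that ray, invertibility of $(M^s)_{\x\x}$ forces the $\x$-block of $G(p_v)-G(q_v)$ to have norm tending to infinity while its $\y$-block stays bounded. By properness, $D_M(G(p_v),G(q_v))\to\infty$, contradicting $D_M(G(p_v),G(q_v))\le K\,D_M(p_v,q_v)=Kc_0$. Hence $A_{\y}=A_{\y'}$.

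The only delicate step I anticipate is verifying that an unbounded $\x$-block paired with a bounded $\y$-block genuinely forces $D_M\to\infty$: this is precisely properness of $D_M$, which follows from the lower bound on $D_M$ coming from the bottom-of-the-Jordan-chain term in inequality (\ref{DMdefn}). I should also confirm that $(M^s)_{\x\x}$ is invertible --- it is the restriction of $M^s$ to the invariant block $V_i$ --- so that the growing direction is not annihilated, and note that I do not even need orthogonality of $A_{\y}$, only that the two matrices differ. Everything else is a direct computation resting on the translation invariance of $D_M$.
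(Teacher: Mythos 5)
Your proof is correct and takes essentially the same approach as the paper's: both arguments fix the source distance via translation invariance of $D_M$ (your common shift by $v$ plays the role of the paper's choice of $\x_i,\x_i'$ with $\x_i+B_{\y}=\x_i'+B_{\y'}=\z_i$), then let the nonzero linear map $A_{\y}-A_{\y'}$ act on arbitrarily large vectors so that the image distance blows up by properness of $D_M$, contradicting the quasi-similarity bound. The only cosmetic difference is that the paper strips off the similarity factor by applying $M^{-s}$ and absorbing the $\y$-dependent terms into a constant $\xi(\y,\y')$, whereas you track the block-triangular structure of $M^s$ explicitly.
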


\begin{proof}
Suppose that for some $\y,\y'$ we have $A_{\y} \neq A_{\y'}$.  Then we can pick a
sequence $\z_{i} \in \oplus_{j=1}^r V_j$ such that
$$|A_{\y} \z_{i}  -A_{\y'} \z_{i} | > n_{i}$$ for a sequence of $n_{i} \rightarrow \infty$.
Next, for each $i$, pick $\x_{i},\x_{i}'$ such that  $\x_{i}+B_{\y}=\z_{i}$ and
$\x_{i}'+B_{\y'}=\z_{i}$.
Note that $$\x_i - \x_i'=B_{\y'}-B_{\y}.$$ Specifically, the distance between $\x_i$ and $\x_i'$ depends only on $\y$ and $\y'$ and not on $i$.
On the other hand, from the definition of $D_M$ we have that
$$ |A_{\y} \z_{i}  -A_{\y'} \z_{i}  + \xi(\y,\y')| \leq D_M (M^{-s}G(\x_{i},\y), M^{-s}G(\x_{i}',\y'))\leq e^{-s}  D_M (G(\x_{i},\y),G(\x_{i}',\y'))$$
where $\xi(\y,\y')$ is a function that does not depend on $i$.
Therefore
\bea |A_{\y} \z_{i}  -A_{\y'} \z_{i} |  &\leq& e^{-s} D_M (G(\x_{i},\y),G(\x_{i}',\y')) + \xi(\y,\y')\\
& \leq & K{D}_M((\x_{i},\y),(\x_{i}',\y')) + \xi(\y,\y')
\eea
But this is impossible since ${D}_M((\x_{i},\y),(\x_{i}',\y'))$ depends only on $\y,\y'$ and on $|\x_i-\x_i'|$ which, as pointed out above, does not depend on $i$. 
\end{proof}

Technically, using the structure of $M$ we should be able to show more here (in the non diagonal case certain directions are tied in with each other) but this will be unnecessary for our main application so we will not include it.
%
%
%
%
\section{Application: Quasi-isometric Rigidity.}\label{endgame}
In this section we consider $G_M\simeq \R \ltimes_M \R^n$ where $M$ is a matrix with $\det{M}=1$ that can be conjugated to a matrix of the form
$$\bm M_l & \\ & M_u^{-1} \fm$$
such that both $M_l$ and $M_u$ have eigenvalues of norm greater than one. Using Theorem \ref{tukia} we prove the following theorem
\begin{thm} Let $\Gamma$ be a finitely generated group quasi-isometric to $G_M$. Then $\Gamma$ is virtually a lattice in $G_{M^\alpha}$ for some $\alpha \in \R$.
\end{thm}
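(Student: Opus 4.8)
The plan is to run the standard quasi-isometric rigidity scheme of Eskin--Fisher--Whyte \cite{EFW}, using Theorem \ref{tukia} as the boundary-rectification input in the non-diagonalizable case. First I would observe that $G_M$ with $M=\mathrm{diag}(M_l,M_u^{-1})$ is the horospherical product of the two negatively curved spaces $G_{M_l}=\R\ltimes_{M_l}\R^{n_l}$ and $G_{M_u}=\R\ltimes_{M_u}\R^{n_u}$: writing $\R^n=\R^{n_l}\oplus\R^{n_u}$, increasing the height $t$ expands the $M_l$-factor and contracts the $M_u^{-1}$-factor, so $G_M$ is the subset $\{t_l=-t_u\}$ of $G_{M_l}\times G_{M_u}$. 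In particular $G_M$ carries two distinguished boundary components, $(\R^{n_l},D_{M_l})$ at the top and $(\R^{n_u},D_{M_u})$ at the bottom, each of which is governed by the theory of Sections \ref{two}--\ref{tukiasection}.

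Since $\Gamma$ is finitely generated and quasi-isometric to $G_M$, it quasi-acts on $G_M$ properly discontinuously and coboundedly. By the work of Eskin--Fisher--Whyte \cite{EFW} and Peng, every quasi-isometry of $G_M$ is, after composing with the flip interchanging the two factors (when these are equivalent) and up to bounded distance, height-respecting. Passing to the index-at-most-two subgroup $\Gamma_0\leq\Gamma$ of elements that quasi-act without a flip, every element of $\Gamma_0$ quasi-acts by a height-respecting quasi-isometry. By Lemma \ref{hrlemma} each such element induces a $QSim_{D_{M_l}}$ map of $(\R^{n_l},D_{M_l})$ and a $QSim_{D_{M_u}}$ map of $(\R^{n_u},D_{M_u})$; uniformity of the quasi-action makes these into uniform groups of $QSim$ maps, and coboundedness makes each of the two induced actions cocompact.

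Now I would apply Theorem \ref{tukia} to each boundary action separately: there is a $QSim_{D_{M_l}}$ map $F_l$ conjugating the top action into $ASim_{D_{M_l}}(\R^{n_l})$, and a $QSim_{D_{M_u}}$ map $F_u$ doing the same at the bottom. The pair $(F_l,F_u)$ assembles into a single height-respecting quasi-isometry $\Phi$ of the horospherical product $G_M$, and conjugating the quasi-action by $\Phi$ yields a quasi-action of $\Gamma_0$ in which every element preserves height and induces $ASim$ maps on both boundaries. Such a map is at uniformly bounded distance from a genuine isometry of $G_{M^\alpha}$ for a single time-rescaling parameter $\alpha$: one height translation by $s$ dilates the top boundary by $e^{s}$ and the bottom by $e^{-s}$, so the two $ASim$ dilation exponents are forced to be reciprocal and hence compatible with a single rescaled one-parameter subgroup $M^\alpha$. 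Thus the straightened quasi-action is at bounded distance from a proper cocompact isometric action of $\Gamma_0$ on $G_{M^\alpha}$, and since $G_{M^\alpha}$ sits in its isometry group as a simply transitive cocompact subgroup, $\Gamma_0$ --- and therefore $\Gamma$, as $[\Gamma:\Gamma_0]\leq 2$ --- is virtually a lattice in $G_{M^\alpha}$.

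The main obstacle will be the last two steps. First, promoting the two independent boundary conjugacies $F_l$ and $F_u$ to an honest quasi-isometry $\Phi$ of $G_M$ and checking that the conjugated quasi-action still satisfies the hypotheses above. Second, the parameter-matching argument that an element preserving height and acting as an $ASim$ on each boundary is uniformly close to an isometry of $G_{M^\alpha}$ for one common $\alpha$ --- this is exactly where the reciprocal-dilation constraint coming from $\det M=1$ must be invoked --- together with the standard but delicate rectification of the resulting cobounded proper quasi-action into a genuine isometric action, which yields the lattice.
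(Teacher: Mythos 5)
Your first half tracks the paper exactly: by \cite{EFW} and Peng the quasi-action is height-respecting, it induces uniform cocompact groups of $QSim_{D_{M_l}}$ and $QSim_{D_{M_u}}$ maps on the two boundaries, and Theorem \ref{tukia} is applied to each boundary separately to conjugate these into $ASim$ actions (this is the paper's invocation of Section 4.1 of \cite{Dy}). The gap is in your endgame. An $ASim_{D_M}$ map is \emph{not} a similarity: its translation parts along the flag of foliations are functions $B_i(\x_{i+1},\ldots,\x_r)$ of the higher coordinates, not constants, whereas a boundary map induced by an actual isometry of $G_{M^\alpha}$ has constant translation parts. Consequently an element of the straightened group induces on $G_{M^\alpha}$ only an ``almost isometry'' (a $(1,C)$-quasi-isometry), and your claim that it is ``at uniformly bounded distance from a genuine isometry of $G_{M^\alpha}$'' is false as stated. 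There is no standard rectification theorem turning a proper cobounded quasi-action by $(1,C)$-quasi-isometries on such a solvable group into an isometric action; producing one is essentially the content of the theorem you are trying to prove, so the step you label ``standard but delicate'' is precisely where the argument breaks.

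What the paper actually does at this point is algebraic, not geometric. Following Section 4.2 of \cite{Dy}, the problem is reduced to a subgroup $\N \subseteq \Gamma$ acting properly by $Bilip_{D_M}$ maps of the unipotent form $(\x_1,\ldots,\x_r)\mapsto(\x_1+B_1(\x_2,\ldots,\x_r),\ldots,\x_r+B_r)$. The genuinely new content of this paper's Section \ref{endgame} --- Lemmas \ref{previouslemma} and \ref{Irineslemma}, which replace the H\"older bounds of the diagonal case by bounds via the function $\Xi$ of Lemma \ref{XitheMap} and an iteration argument --- shows the $B_i$'s have uniformly controlled variation, which is what allows one to build a finite generating set and conclude that $\Gamma$ is polycyclic. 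Only then does one get a lattice: by Mostow \cite{Mos}, a polycyclic group is virtually a lattice in some solvable Lie group $\mathcal{L}$; the results of \cite{Cor,F,Ge,Gu,O} identify $\mathcal{L}\simeq\R\ltimes_{M'}\R^n$; and Theorem 5.11 of \cite{FM3} shows $M'$ has the same Jordan form as $M^{\alpha}$. Note in particular that the exponent $\alpha$ emerges from this last identification step, not from the reciprocal-dilation constraint you invoke via $\det M=1$. Your proposal omits all of this machinery, so it cannot be completed as written.
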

The proof outline follows the diagonal case exactly. By [EFW] and [P], $\Gamma$ quasi-acts by height-respecting quasi-isometries on $G_M$.
Therefore $\Gamma$ acts by $QSim_{D_{M_l}}$ maps on $\partial_l G_M \simeq \partial G_{M_l}$ and by $QSim_{D_{M_u}}$ maps on $\partial_u G_M \simeq \partial G_{M_u}$. Following Section 4.1 in \cite{Dy} these actions can be conjugated to an action of $\Gamma$ on $G_M$ by \emph{almost isometries} (see \cite{Dy} Section 4.1 for more details.) This is where Theorem \ref{tukia} is used. In Section 4.2 of \cite{Dy} the problem is reduced to studying a proper quasi-action of a subgroup $\N \subseteq \Gamma$ on $\R^n$ by $Bilip_{D_M}$ maps of the form
$$(\x_1, \ldots,  \x_r) \mapsto (\x_1 + B_1({\x_2, \ldots, \x_r}), \x_2 + B_2(\x_3, \ldots,\x_r), \ldots, \x_r +  B_r).$$
This reduction is the same in the nondiagonal case.

The next step is to show that any group acting in such a manner must be finitely generated polycyclic. To prove this, \cite{Dy} uses bounds on the $B_i$'s and induction to building a finite generating set for $\N$. In the diagonal case the bounds on the $B_i$'s are simply H\"older bounds but in the nondiagonal case the bounds are more complicated.
The following lemma provides the bounds in the non-diagonal case.
\begin{lemma}\label{previouslemma} Let $\y=(\x_{i+1},\cdots, \x_r)$ and $\y'=(\x_{i+1}',\cdots, \x_r')$. Then
$$|B_i(\y) - B_i(\y')| \leq \Xi_i( |\y-\y'|) $$
where $\Xi_i$ is some function with the property that $\Xi_i(w) \to 0$ as $w \to 0$.
\end{lemma}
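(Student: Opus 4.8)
The plan is to deduce the bound from Lemma \ref{XitheMap}, which already supplies a modulus-of-continuity estimate for the coordinate functions of any $Bilip_{D_M}$ map; the only real work is to translate that estimate for $f_{\alpha,\ell}$ into one for $B_i$, and to promote the single-block bound of Lemma \ref{XitheMap} to a multi-block bound. First I would exploit the special unipotent shape of the maps. Writing $G=(f_1,\ldots,f_r)$ in the single-index notation, the hypothesis reads $f_i(\x_i,\ldots,\x_r)=\x_i+B_i(\x_{i+1},\ldots,\x_r)$. Choose two points $p,q$ sharing the same coordinates $\x_1,\ldots,\x_i$ and whose higher coordinates are $\y=(\x_{i+1},\ldots,\x_r)$ and $\y'=(\x_{i+1}',\ldots,\x_r')$ respectively. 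Since $B_i$ does not depend on $\x_i$ and the common $\x_i$ cancels, one gets $f_i(p)-f_i(q)=B_i(\y)-B_i(\y')$, so it suffices to bound $|f_i(p)-f_i(q)|$ by a function of $|\y-\y'|$ that vanishes at $0$.

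Next I would interpolate one block at a time. Lemma \ref{XitheMap} bounds the change in $f_i$ only when $p$ and $q$ differ in a single block $\x_{\beta,j}$ with $(\beta,j)>i$, giving $|f_i(p)-f_i(q)|<\themap(|\x_{\beta,j}|)$ for a nondecreasing $\themap$ vanishing at $0$. To reach the general case I would pass from $p$ to $q$ through the chain of intermediate points obtained by switching the blocks $\x_{i+1},\x_{i+2},\ldots,\x_r$ from their $\y$-values to their $\y'$-values one block at a time, with the lower coordinates held fixed throughout. Each consecutive pair then differs in exactly one block $k>i$, so Lemma \ref{XitheMap} applies and contributes a function $\themap_k$ evaluated at that block's difference, which is at most $|\y-\y'|$. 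Summing with the triangle inequality and using monotonicity of each $\themap_k$, I would set
\[
\Xi_i(w):=\sum_{k>i}\themap_k(w),
\]
a finite sum of nondecreasing functions vanishing at $0$, hence itself vanishing at $0$; this yields $|B_i(\y)-B_i(\y')|\leq\Xi_i(|\y-\y'|)$ as desired.

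The main obstacle, and the only step that is not purely formal, is this passage from the single-block estimate to the multi-block one. The telescoping is harmless precisely because the function $\themap$ produced by Lemma \ref{XitheMap} is uniform in the base point: its inductive construction depends only on the magnitude of the changing block, the bilipschitz constant $K$, and the Jordan data of $M$, not on the values of the remaining coordinates. I would verify this uniformity directly from the proof of Lemma \ref{XitheMap}, since it is exactly what guarantees that the same $\themap_k$ may be used at every intermediate point, and hence that $\Xi_i$ is a genuine modulus of continuity independent of where $\y$ and $\y'$ lie.
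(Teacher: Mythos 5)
Your proof is correct and follows essentially the same route as the paper: the paper's proof also sets $p=(\x,\y)$, $q=(\x,\y')$, observes that the $i$-th coordinate of $\gamma p - \gamma q$ is exactly $B_i(\y)-B_i(\y')$, and then invokes Lemma \ref{XitheMap}. If anything you are more careful than the paper, which applies Lemma \ref{XitheMap} directly to points differing in several blocks; your block-by-block telescoping, justified by the uniformity of $\themap$ in the base point, is precisely the detail needed to make that application legitimate.
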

\begin{proof}
Suppose $\gamma \in \N$. Set $p=(\x,\y), q=(\x,\y')$ and note that the $i$th coordinate of $\gamma p - \gamma q$ is $B_i(\y)-B_i(\y')$.
The result follows by applying Lemma \ref{XitheMap}.
\end{proof}\\
Now, as in Lemma 22 from \cite{Dy}, we derive even stronger bounds using the group structure of $\N$.
\begin{lemma}\label{Irineslemma}
  If $\g \in \N$ then 
$$|B_{i,\g}(\y)-B_{i,\g}(\y')| \leq \e_{i,\g}$$
where $\e_{i,\g}$ is a bound that depends only on the bilipschitz constant $K$ and on the bounds for the functions $B_{j,\g}$ for $j>i$.


\end{lemma}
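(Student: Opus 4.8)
The plan is to argue by downward induction on the index $i$, using the group law of $\N$ to convert the modulus--of--continuity bound of Lemma \ref{previouslemma} into one that no longer sees $|\y-\y'|$. The base case $i=r$ is immediate: $B_{r,\g}$ is a constant, so its oscillation is $0$ and I may take $\e_{r,\g}=0$. For the inductive step I assume uniform oscillation bounds $\e_{j,\g}$ on $B_{j,\g}$ for every $j>i$ and produce $\e_{i,\g}$. The one group-theoretic input I rely on is the composition law: for $\g,\delta\in\N$ one has $B_{i,\g\delta}(\y)=B_{i,\delta}(\y)+B_{i,\g}(\bar{\delta}\,\y)$, where $\bar{\delta}$ is the map induced by $\delta$ on the $\y=(\x_{i+1},\dots,\x_r)$ coordinates.

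The first step is a sharpened bilipschitz estimate obtained by adjusting free coordinates. Given $\y,\y'$, set $p=(\x,\y)$ and $q=(\x',\y')$ and choose the first $i$ coordinates $\x,\x'$ from the top down so that every coordinate of index $\le i$ of $\g p-\g q$ vanishes; this is possible because the $k$-th coordinate of $\g p-\g q$ depends affinely on $\x_k-\x_k'$. Then $\g p-\g q$ is supported on indices $j>i$, where $(\g p-\g q)_j$ equals $(\y-\y')_j$ plus a correction bounded by $\e_{j,\g}$, while the $i$-th coordinate of $p-q$ has magnitude exactly $|B_{i,\g}(\y)-B_{i,\g}(\y')|$. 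Feeding this into the lower bilipschitz inequality and reading off Equation \ref{DMdefn} bounds $|B_{i,\g}(\y)-B_{i,\g}(\y')|$ in terms of $K$, the $\e_{j,\g}$, and $|\y-\y'|$; since the ordering puts larger eigenvalues first, the exponent $\alpha_i/\alpha_j\ge 1$ forces the directions with $\alpha_j<\alpha_i$ to drop out, leaving only the equal-eigenvalue directions.

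It remains to delete the residual $|\y-\y'|$ dependence coming from the equal-eigenvalue directions, and this is exactly where the group structure is indispensable. Because $\N$ acts by a uniform quasi-action, every power $\g^n$ is again $K$-bilipschitz, so Lemma \ref{previouslemma} and the inductive bounds apply to all $\g^n$ simultaneously; indeed, running the induction shows the bounds $\e_{j,\cdot}$ can be taken uniform over $\N$, hence over the whole orbit $\{\g^n\}$. This uniformity says that $\bar{\g}$ stays within a fixed bounded distance of a translation. Telescoping the composition law gives $B_{i,\g^n}(\y)=\sum_{m=0}^{n-1}B_{i,\g}(\bar{\g}^{\,m}\y)$, and the uniform estimate on $B_{i,\g^n}$ bounds these partial sums independently of $n$. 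Playing this boundedness against the almost-translation behaviour of $\bar{\g}$ lets me compare the value of $B_{i,\g}$ at an arbitrary pair with its values along a $\bar{\g}$-orbit of bounded-distance pairs, at which point the sharpened estimate of the previous paragraph returns a bound depending only on $K$ and on $\{\e_{j,\g}\}_{j>i}$. Taking $\e_{i,\g}$ to be this quantity closes the induction.

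I expect the equal-eigenvalue directions — and, in the non-diagonal setting, the interaction of levels within a single Jordan block — to be the main obstacle. There the metric $D_M$ neither decays nor cleanly isolates the $i$-th coordinate, since the polynomial and logarithmic corrections of Equation \ref{DMdefn} mix the levels of a Jordan chain, and a single bilipschitz map can have unbounded oscillation (a shear is the model example), so the conclusion genuinely fails without the uniform group hypothesis. The delicate point is therefore to check that the group-theoretic reduction really removes the $|\y-\y'|$ dependence rather than merely pushing it into the lower-index functions $B_{k,\g}$ with $k<i$, for which no bound is yet available, and to carry the estimates through the Jordan corrections; this is precisely the step that departs from the diagonalizable treatment in \cite{Dy}.
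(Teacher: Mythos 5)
Your skeleton --- downward induction, the trivial base case $i=r$, the composition law $B_{i,\g^n}(\y)=\sum_{m=0}^{n-1}B_{i,\g}(\bar{\g}^m\y)$, and the use of powers of $\g$ with a uniform bilipschitz constant --- matches the paper, and your ``sharpened bilipschitz estimate'' is a legitimate refinement of Lemma \ref{previouslemma}. But the core of your argument rests on a claim that is false: that ``running the induction shows the bounds $\e_{j,\cdot}$ can be taken uniform over $\N$, hence over the whole orbit $\{\g^n\}$.'' The quantities the induction actually feeds on are not oscillations but the sup norms $B^{max}_{j,\g}$ of the translation parts (one needs them to bound $D_M((\x,\y),(\x,\g\y))$), and these are uniform neither over $\N$ (the action is cocompact, so translation parts are unbounded over the group) nor over powers: already at the base case $B_{r,\g^n}=nB_{r,\g}$, and in general $B^{max}_{j,\g^n}$ grows linearly in $n$. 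The paper's proof is structured precisely to avoid needing such uniformity: the inductive bounds are invoked only for the \emph{fixed} element $\g$, while for the powers $\g^n$ one uses only the uniform constant $K$; the bridge between the two is the telescoping identity $B_{i,\g}(\y)-B_{i,\g}(\g^n\y)=B_{i,\g^n}(\y)-B_{i,\g^n}(\g\y)$ (Equation \ref{iterateBeqn}), which trades an orbit-separated difference of $B_{i,\g}$ for a difference of $B_{i,\g^n}$ at the single pair $((\x,\y),(\x,\g\y))$, whose $D_M$-distance is controlled by the fixed $\g$ alone (Equation \ref{bounditerateDM}). Note that your alternative of running the sharpened estimate along orbit pairs $(\y,\bar{\g}^m\y)$ cannot substitute for this, since those pairs drift apart linearly in $m$, so any bound phrased in terms of $|\y-\y'|$ blows up along the orbit.

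The second, related gap is the step that actually deletes the $|\y-\y'|$ dependence. Your sentence ``the uniform estimate on $B_{i,\g^n}$ bounds these partial sums independently of $n$'' is false as stated: the partial sums $B_{i,\g^n}(\y)$ grow like $nB_{i,\g}(\y)$; only differences of them can be bounded. And ``playing this boundedness against the almost-translation behaviour of $\bar{\g}$'' is never converted into an inequality, so the worry you raise yourself --- that the dependence on $|\y-\y'|$ is merely pushed around rather than removed --- is not resolved. What the paper does is an averaging argument: from the orbit-oscillation bound one gets $|B_{i,\g^n}(\y)-nB_{i,\g}(\y)|\leq n\chi'$ and likewise for $\y'$; from Lemma \ref{previouslemma} applied to $\g^n$ --- legitimate because $\g^n$ is $K$-bilipschitz with the \emph{same} $K$, which is the only uniformity actually needed --- one gets $|B_{i,\g^n}(\y)-B_{i,\g^n}(\y')|\leq\Xi_i(|\y-\y'|)$ with $\Xi_i$ independent of $n$. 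Combining these, $n|B_{i,\g}(\y)-B_{i,\g}(\y')|\leq\Xi_i(|\y-\y'|)+2n\chi'$, and dividing by $n$ and letting $n\to\infty$ annihilates the $\Xi_i(|\y-\y'|)$ term, leaving $|B_{i,\g}(\y)-B_{i,\g}(\y')|\leq 2\chi'$, a quantity depending only on $K$ and the bounds for $B_{j,\g}$ with $j>i$. This division-by-$n$ limit is the crux of the lemma, and it is absent from your proposal.
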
\begin{proof}
We will work by induction. The case $i = r$ is clear since $B_{r,\g}$
is a constant. We now assume the above statements hold for $j>i$ and prove it for $j=i$.
In other words, we assume that for $j>i$ there is some constant $B^{max}_{j,\g}$ such that $B_{j,\g}(\y) \leq B^{max}_{j,\g}$ for all $\y$.
 To aide with notation let $\x=(\x_1, \cdots, \x_i) $. Then we can write
 $$ (\x_1, \cdots, \x_r)=( \x, \y).$$
  For $\g
  \in \N$ we have
$$\g(\x,\y)=(\cdots , \x_i + B_{i,\g}(\y), \cdots)$$
and
$$ \g^n(\x,\y)=(\cdots, \x_i+ B_{i,\g^n}(\y), \cdots).$$
By an abuse of notation we will also write
$$\g \y:= (\x_{i+1} +B_{i+1,\g}(\x_{i+2}, \ldots, \x_r), \cdots, \x_r +  B_{r,\g}).$$
First notice that
\bea
	D_M( (\x,\y) ,(\x, \g \y)) &\leq& \max_{j>i}\{ |\sum \frac{t^k}{k!} B_{j,\g}(\y)| \} =: \chi(B_{j,\g}(\y))\\
						   &\leq& \max_{j>i}\{ |\sum \frac{t^k}{k!} B^{max}_{j,\g})|\} =: \chi(B_{j,\g}^{max}).
\eea
The second inequality follows by induction. Note that since the $B_{j,\g}(\y)$ are bounded then so is $t$ since
$t=\ln ( D_M((\x,\y),(\x,\g \y)))$ is a function of the $B_{j,\g}(\y)$. The exact bound is unnecessary. All we need to know is that $\chi$ is a function with $\chi(\omega) \to 0$ as $ \omega \to 0$. We will use $\chi$ to represent any function with these properties.
Next we list two observations that will be useful in the calculation that follows:
\begin{equation}\label{iterateBeqn}
B_{i,\g^n}(\y)=B_{i,\g}(\y)+B_{i,\g}(\g \y)\cdots
B_{i,\g}(\g^{n-1}\y),
\end{equation}
\begin{equation}\label{bounditerateDM}
D_M(\g^n(\x,\y),\g^n(\x,\g \y)) \leq K D_M( (\x,\y) ,(\x, \g \y)) \leq K \chi(B_{j,\g}^{max}).
\end{equation}
Equation \ref{iterateBeqn} allows us to write
$$|B_{i,\g}(\y)-B_{i,\g}(\g^n \y)| =|B_{i,\g^n}(\y)-B_{i,\g^n}(\g \y)| $$
and combined with Equation \ref{bounditerateDM} we have
$$|B_{i,\g}(\y)-B_{i,\g}(\g^n \y)+ \sum_{k=1}^{r-i} \frac{t^k}{k!} (B_{i+k,\g}(\y)-B_{i+k,\g}(\g^n \y))| \leq K^{\alpha_i} D_M( (\x,\y) ,(\x, \g \y))^{\alpha_i}.$$
In this case  $t=\ln ( D_M(\g^n(\x,\y),\g^n(\x,\g \y))) \leq  \ln ( K \chi(B_{j,\g}^{max}) )$ by Equation \ref{bounditerateDM}.
This allows us to derive the following estimate:
\bea
|B_{i,\g}(\y)-B_{i,\g}(\g^n \y)| &\leq& K^{\alpha_i} D_M( (\x,\y) ,(\x, \g \y))^{\alpha_i} + \sum \frac{t^k}{k!} |B_{i+k,\g}(\y)-B_{i+k,\g}(\g^n \y)|\\
							        &\leq& K^{\alpha_i} \chi(B_{j,\g}^{max})^{\alpha_i} + \sum \frac{(\ln ( K \chi(B_{j,\g}^{max}))^k}{k!} |B_{i+k,\g}(\y)-B_{i+k,\g}(\g^n \y)|\\
								 & \leq& K^{\alpha_i}   \chi(B_{j,\g}^{max})^{\alpha_i}  + \sum \frac{(\ln ( K \chi(B_{j,\g}^{max}))^k}{k!} |B_{i+k,\g}^{max}|\\
								 &\leq & \chi'( B_{j,\g}^{max}).
 \eea
Again, $\chi'$ depends only on $B_{j,\g}^{max}$ for $j>i$.\\

\noindent{}Now by Lemma \ref{previouslemma} we know that for arbitrary $\y$ and $\y'$
\bea|B_{i,\g}(\y)+B_{i,\g}(\g \y) \cdots B_{i,\g}(\g^{n-1}\y)
	- B_{i,\g}(\y')  \cdots -B_{i,\g}(\g^{n-1} \y')|&=&|B_{i,\g^n}(\y)-B_{i,\g^n}(\y')|\\
	&\leq&\Xi_i(|\y-\y'|) .
\eea
We also know that for all $s$
$$|B_{i,\g}(\g^s \y)-B_{i,\g}(\y)|< \chi'( B_{j,\g}^{max})$$
and $$|B_{i,\g}(\g^s\y')-B_{i,\g}(\y')|< \chi'( B_{j,\g}^{max})$$ so that
$$|nB_{i,\g}(\y)-nB_{i,\g}(\y')| \leq \Xi_i(|\y-\y'|)+ 2n \chi'( B_{j,\g}^{max})$$
for all $n$. In particular
$$|B_{i,\g}(\y)-B_{i,\g}(\y')| \leq \frac{\Xi_i(|\y-\y'|)}{n}+ 2 \chi'( B_{j,\g}^{max})$$
so that as $n\to \infty$
$$|B_{i,\g}(\y)-B_{i,\g}(\y')|\leq 2 \chi'( B_{j,\g}^{max}).$$
\end{proof}\\
The above lemma is the only ingredient needed to show that $\Gamma$ is polycyclic.
Once we know that $\Gamma$ is polycyclic we know by work of Mostow \cite{Mos} that $\Gamma$ is (virtually) a lattice in some solvable Lie group $\mathcal{L}$. Section 4.3 in \cite{Dy} combines work of  \cite{Cor,F,Ge,Gu,O} to show that $\mathcal{L}\simeq \R \ltimes_{M'} \R^n$. Finally, the proof is finished by Theorem 5.11 from \cite{FM3} which concludes that $M'$ has the same Jordan form as $M^{\alpha}$ for some $\alpha\in \R$.

\end{document}